\newcommand{\SO}{\mathrm{SO}}
\newcommand{\HH}{{\mathbb H}}
\newcommand{\RR}{{\mathbb R}}
\newcommand{\R}{{\mathbb R}}
 \newcommand{\ZZ}{{\mathbb Z}}
\newcommand{\wi}{\widetilde}
\renewcommand{\phi}{\varphi}
\newcommand{\cercle}{\mathbb{S}}
\newcommand{\Ric}{\text{Ric}}
\newcommand{\spin}{\mathrm{Spin}}
\newcommand{\C}{\mathbb{C}}
\newcommand{\Spin}{\mathrm{Spin}}
\newcommand{\End}{\mathrm{End}}
\newcommand{\Spinc}{\text{Spin}^c}
\newcommand{\spinc}{\Spinc}
\newcommand{\id}{\mathrm{Id}} 
\renewcommand{\d}{\mathrm{d}}
\renewcommand{\i}{\mathrm{i}}
\newtheorem{thm}{Theorem}[section]
\newtheorem{lemma}[thm]{Lemma}
\newtheorem{prop}[thm]{Proposition}
\newtheorem{cor}[thm]{Corollary}
\theoremstyle{definition}      
\newtheorem{remark}[thm]{Remark}
\begin{document}

\title{Complex Generalized Killing Spinors on  Riemannian Spin$^c$ Manifolds}

\author{Nadine Gro{\ss}e}
\address{Mathematisches Institut, 
Universit\"at Leipzig, 
04009 Leipzig, 
Germany}
\email{grosse@math.uni-leipzig.de}

\author{Roger Nakad}
\address{Department of Mathematics and Statistics, Faculty of Natural and Applied Sciences, Notre Dame University-Louaize, P.O. Box 72, Zouk Mikael, Zouk Mosbeh, Lebanon.}
\email{rnakad@ndu.edu.lb}
\subjclass[2010]{ 53C27, 53C25}
 \keywords{ $\spinc$ structures, complex generalized Killing spinors, imaginary generalized and imaginary Killing spinors, associated differential forms, conformal Killing vector fields}

\begin{abstract} 
In this paper, we extend the study of  generalized Killing spinors on  Riemannian  $\Spinc$ manifolds started by Moroianu and Herzlich to complex Killing functions. We prove that such spinor fields  are always  real  $\spinc$ Killing spinors or imaginary generalized $\Spinc$ Killing spinors, providing that the dimension of the manifold  is greater or equal to $4$. Moreover, we classify Riemannian $\Spinc$ manifolds carrying imaginary and imaginary generalized Killing spinors.
\end{abstract}

\maketitle

\section{Introduction}
 
On a Riemannian $\spin$  manifold $(M^n, g)$ of dimension $n \geq 2$, a non-trivial spinor field $\psi$ is called a {\it complex generalized Killing spinor field}  with smooth Killing function $K$ if
\begin{eqnarray}\label{CGKSspin}
\nabla_X\psi = K \,X\cdot\psi,
\end{eqnarray}
for all vector fields $X$ on $M$, where $\nabla$ denotes the spinorial Levi-Civita connection and $``\cdot"$ the Clifford multiplication. Here $K := a +\i b$ denotes a complex function with real part function $a$ and imaginary part function $b$.\\

It is well known that the existence of such spinors imposes several restrictions on the geometry and the topology of the manifold. More  precisely, on a Riemannian $\spin$ manifold, a {\it complex generalized Killing spinor} is either a {\it real generalized Killing spinor} (i.e., $b = 0$ and $a \neq 0$),  an {\it imaginary generalized Killing spinor} (i.e., $a = 0$ and $b\neq 0$) or  a {\it parallel spinor} (i.e., $b=a = 0$) \cite{bfgk, CGLP}. Manifolds with parallel spinor fields are Ricci-flat and can be characterised by their holonomy group \cite{hitchin, frr}. Riemannian $\spin$ manifolds carrying parallel  spinors  have been classified by M. Wang \cite{wang}.\\

When $\psi $ is a {\it real generalized Killing spinor}, then $a$ is already a nonzero constant, i.e., $\psi$ is in fact a real Killing spinor. Those Killing spinors on simply connected Riemannian $\spin$ manifolds were classified by C. B\"ar  \cite{bar}. Real Killing
spinors occur in physics, e.g. in supergravity theories, see \cite{DN}, but they are also of
mathematical interest: The existence of real Killing spinor field implies that the manifold is a compact Einstein manifold of scalar curvature  $4n(n-1)a^2$. In dimension 4,  it has constant sectional curvature. Real Killing spinors are also special solutions of the twistor
equation \cite{lich, lich2} and moreover, they are related to the spectrum of the Dirac operator. In fact,  T. Friedrich \cite{fri} proved a lower
bound for the eigenvalues of the Dirac operator involving the infimum of the scalar
curvature. The equality case is characterised by the existence of a real Killing spinor. More precisely, $n^2 a^2$ is the smallest  
eigenvalue of the square of the Dirac operator \cite{fri, OH2}. Other geometric and physics applications of the existence of  real Killing spinors can be found in  \cite{CGLP, franc, NP, Su, OH1, OH2, TF1, TF2, TF3, TK1, TK2, TK3}.  When $\psi$ is an  imaginary generalized  Killing spinor, then $M$ is a non-compact Einstein manifold. Moreover,  two cases may occur: The function $b$ could be  constant (then $\psi$ is called an {\it imaginary Killing spinor}) or it is a non-constant function (then we will continue to call $\psi$ a {\it imaginary generalized Killing spinor}). H. Baum \cite{baum2, baum, baum1} classified Riemannian $\spin$ manifolds carrying imaginary Killing spinors.  Shortly later,  H-B. Rademacher  extended this classification to imaginary generalized Killing spinors  \cite{rada} -- here only so called type I Killing spinors can occur, see Section \ref{sec_im}.\\ 

Recently, $\Spinc$ geometry  became a field of active research with the advent of Seiberg-Witten theory. Applications of the Seiberg-Witten theory to $4$-dimensional geometry and topology are already notorious. From an intrinsic point of view,  $\spin$,  almost complex, complex, K\"{a}hler, Sasaki and some classes of CR manifolds have a canonical $\Spinc$ structure. Having a $\spinc$ structure is a weaker condition than having a $\spin$ structure. Moreover, when shifting from the classical $\spin$ geometry to $\Spinc$ geometry, the situation is more general  since the connection on the $\spinc$ bundle, its curvature, the Dirac operator and its spectrum  will not only depend on the geometry of the manifold but also on the connection (and hence the curvature) of
 the auxiliary line bundle associated with the  $\Spinc$ structure.\\

A. Moroianu studied  Equation~\eqref{CGKSspin} on Riemannian $\Spinc$ manifolds when $b = 0$ and $a$ is constant, i.e., when $\psi$ is a parallel spinor or a real Killing spinor \cite{moro}. He proved that a simply connected complete Riemannian  $\Spinc$ manifold carrying a parallel spinor is isometric to the Riemannian product of a K\"ahler manifold (endowed with its canonical $\Spinc$ structure) with a $\spin$ manifold carrying a parallel spinor. Moreover, a simply connected complete Riemannian  $\Spinc$ manifold carrying a real Killing spinor is isometric to a Sasakian manifold endowed with its canonical $\Spinc$ structure.  In 1999, M. Herzlich and A. Moroianu considered Equation~\eqref{CGKSspin} for $b = 0$ on Riemannian $\Spinc$ manifolds \cite{8}. They proved that, if $n \geq 4$,  real generalized  $\Spinc$ Killing 
spinor do not exist, i.e., they are already  real $\spinc$ Killing spinor. In dimension $2$ and $3$, they constructed explicit examples of   $\Spinc$ manifolds carrying real generalized Killing spinor, i.e., where the real  Killing function is not constant. \\

We recall also that the existence of parallel spinors, real Killing spinors and imaginary Killing spinors do not only give obstruction of the geometry and the topology of the $\spin$ or $\Spinc$  manifold $(M^n, g)$ itself, but also, the geometry and topology of hypersurfaces and submanifolds of $(M^n, g)$. In fact, The restriction of such $\spin$ or $\Spinc$ spinors  is an effective tool to  study the geometry and the topology of submanifolds \cite{amm_00, amm_05, bar_98, h1, h2, h3, h4, h5, n1, n2}.\\

In this paper, we extend the study of  Equation~\eqref{CGKSspin} on Riemannian $\Spinc$ manifolds. After giving some preliminaries of $\Spin^c$ structures in Section \ref{prelim}, we consider general properties of complex generalized Killing spinors in Section \ref{sec_compl} and prove

\begin{thm}\label{no_complex} 
Let $(M^n, g)$ be a connected Riemannian $\Spinc$ manifold of dimension $n \geq 4$, carrying a complex  generalized Killing spinor $\psi$ with Killing function $K = a + \i b$, $a,b\in C^\infty(M,\R)$. Then, $a$ or $b$ vanishes identically on $M$. In other words, $\psi$ is already a real generalized Killing spinor with Killing function $a$ (and hence $a$ is constant, i.e, $\psi$ is a real Killing spinor) or $\psi$ is an imaginary generalized Killing spinor with Killing function $ib$ ($b$ is constant or a function). 
\end{thm}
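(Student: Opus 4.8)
The plan is to exploit the integrability condition obtained by differentiating the generalized Killing equation \eqref{CGKSspin} and antisymmetrizing, which produces an expression for the spinorial curvature $\mathcal{R}_{X,Y}\psi$ in terms of $K$, $dK$ and Clifford products. On a $\Spinc$ manifold this curvature splits as $\frac{1}{4}\sum e_i\cdot R(X,Y)e_i\cdot\psi + \frac{\i}{2}(F_A)(X,Y)\psi$, where $F_A$ is the curvature two-form of the auxiliary line bundle. Carrying out this computation in the usual way (as in B\"ar--Friedrich--Grunewald--Kath for the $\spin$ case and in Moroianu--Herzlich for the $\spinc$ case) yields, after contraction, a formula of the shape
\begin{eqnarray*}
\tfrac12\Ric(X)\cdot\psi + (\text{line-bundle term})\cdot\psi = \big(\text{terms in } K^2, X(K), \grad K\big)\cdot\psi.
\end{eqnarray*}
Writing $K=a+\i b$ and separating real and imaginary parts, the key point is that the term $X(K)$ produces $X(a)+\i X(b)$ and the term $K^2$ produces $(a^2-b^2)+2\i ab$; since the geometric side (Ricci plus line-bundle curvature, all real operators acting on the fixed spinor $\psi$) is fixed, one gets two coupled spinorial identities for the real and imaginary parts.

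The main step is then an algebraic/analytic argument on the function level: from the coupled identities one extracts, by taking suitable real inner products $\<\cdot\,,\psi\>$ (which are real-valued because the $\spinc$ Clifford action of a vector is skew-Hermitian) and using $|\psi|$ is nowhere zero, scalar relations among $a$, $b$, $da$, $db$. Concretely I expect to obtain something like $\grad(ab) = 0$ together with $a\,db + b\,da$ being controlled, and crucially a relation forcing $a\,db = b\,da$ pointwise, i.e. $\grad(a/b)=0$ or $\grad(b/a)=0$ on the open set where the relevant function is nonzero; combined with the fact (already used by Herzlich--Moroianu for $n\ge4$) that a real generalized Killing function is automatically constant, one concludes that on any connected component either $b\equiv 0$, or $a$ is a constant multiple of $b$ — and then plugging a genuinely complex constant ratio back into the curvature identity contradicts the reality of $\Ric$ and $F_A$ unless that constant is $0$ or $\infty$. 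Hence $a\equiv 0$ or $b\equiv0$.

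The hard part will be handling the interaction between the $X(K)$-type terms and the line-bundle curvature term $F_A\cdot\psi$, which has no analogue in the $\spin$ setting and which is itself an unknown real two-form; one must show it cannot conspire with a nonconstant complex Killing function. I would deal with this by noting that $F_A\cdot\psi$ contributes only to the ``even-degree'' (here degree $2$) part of the Clifford-module decomposition while the mixed terms $X(a)\cdot X(b)$, $ab\,X\cdot$, etc. contribute to degrees $0$ and $2$ in a way whose degree-$0$ component already pins down $ab$ and $|da|,|db|$-type quantities independently of $F_A$; thus the $\spinc$ correction drops out of precisely the relations we need. A secondary technicality is that $a$ and $b$ may each vanish on parts of $M$, so the division arguments must be localized to open sets and then the conclusion propagated by connectedness and continuity — here the dimension hypothesis $n\ge 4$ enters exactly as in \cite{8}, since in dimension $2$ and $3$ the Clifford-algebraic rigidity that kills the nonconstant case fails.

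Finally, once $b\equiv0$ is the surviving case, the statement that $a$ is then constant is quoted directly from Herzlich--Moroianu's result for $n\ge4$ recalled in the introduction; and in the case $a\equiv0$ nothing more is claimed, so $\psi$ is an imaginary generalized Killing spinor with (possibly nonconstant) Killing function $\i b$. This completes the dichotomy asserted in Theorem~\ref{no_complex}.
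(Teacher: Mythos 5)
There is a genuine gap: the core of the proof, the pointwise identity $ab=0$, is never actually established in your plan. You write that you ``expect to obtain something like $\grad(ab)=0$'' and a relation forcing $a\,\d b=b\,\d a$, but no derivation is offered, and these relations do not follow from the curvature identities you invoke. Differentiating \eqref{CGKSspin}, contracting, and taking real and imaginary parts of Hermitian products with $\psi$ (the Lichnerowicz-type identity and the Ricci identity, i.e.\ Lemma~\ref{gen}\eqref{Lich_compl} and \eqref{ric_kill}) yields essentially one scalar relation, $\<\d a\cdot\psi,\psi\>=2n\,ab\,\i\,|\psi|^2$ (Lemma~\ref{gen}\eqref{dabspin}); you are right that the auxiliary term $\frac{\i}{2}\Omega\cdot\psi$ drops out of this relation, but a single equation in $a$, $b$, $\d a$ cannot force $ab=0$, let alone a constant ratio $a/b$. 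The paper's essential extra input, absent from your proposal, is the family of forms $\omega_p(X_1,\dots,X_p)=\<(X_1\wedge\dots\wedge X_p)\cdot\psi,\psi\>$ (and $\overline\omega_p$ when $n$ is even) satisfying $\d\omega_p=(K(-1)^p-\overline K)\,\omega_{p+1}$; applying $\d^2=0$ and evaluating the resulting identity in degree $n-1$ against the volume element produces a \emph{second}, independent computation $\<\d a\cdot\psi,\psi\>=2\,ab\,\i\,|\psi|^2$, and comparing the two identities gives $ab\equiv 0$ (Lemma~\ref{ab0}). Your degree-$0$/degree-$2$ heuristic has no substitute for this step, and your proposed final contradiction (``a genuinely complex constant ratio contradicts the reality of $\Ric$ and $F_A$'') is unsubstantiated: reality only yields identities of the above type, and the unknown real two-form $\Omega$ provides the $\Spinc$ setting with exactly the freedom that defeats such soft arguments -- this is why dimensions $2$ and $3$ remain open (Remark~\ref{rem_low}).

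The part of your argument that does align with the paper is the endgame: once $ab\equiv0$ is known, on the closed set $\{b=0\}$ the spinor is a real generalized Killing spinor, so by Herzlich--Moroianu ($n\geq4$) $a$ is constant there, while $ab=0$ forces $a=0$ on $\{b\neq 0\}$; smoothness of $a$ and connectedness of $M$ then give $a\equiv0$ or $b\equiv0$. So your use of the Herzlich--Moroianu rigidity and of a localization/connectedness argument is in the right spirit, but the pointwise relation $ab=0$ on which the whole dichotomy rests is missing from your proof.
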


Proof of Theorem~\ref{no_complex} is based on the existence of  differential forms which are  naturally
associated to the complex generalized Killing spinor field. For $n=2,3$ we still do not know whether there are complex generalized Killing spinors which are neither purely real or purely imaginary. But at least we know that $a$ has to vanish in all points where $b$ does not and vice versa, cf. Lemma~\ref{ab0}. Thus, in case they exist they would be very artificial, cf. Remark \ref{rem_low}. In dimension $\geq 4$ these cases are excluded since even locally there are no non-constant real Killing functions.\\

 Since parallel and real generalized Killing spinors were already studied in \cite{moro, 8}, we can focus  then on studying \eqref{CGKSspin} with $a =   0$, i.e., we give a classification of Riemannian $\spinc$ manifolds carrying imaginary generalized Killing spinors. Most of the cases appearing then will be the obvious generalization of the $\spin$ case, see Theorem~\ref{type_I} and Proposition \ref{type_II_con}. But in contrast to the $\spin$ case, type II imaginary generalized Killing spinors with non-constant Killing function exist -- but only in dimension $2$, cf. Proposition \ref{no_gen_II_1} and Theorem~\ref{gen_II_2}.

\section{Preliminaries} \label{prelim}

\subsection{Conventions and general notations.}
Hermitian products $\<.,.\>$ are always anti-linear in the second component. If a vector is decorated with a hat, this vector is left out, e.g. $v_1,\ldots, \hat{v}_j,\ldots, v_n$ is meant to be $v_1,\ldots, v_{j-1}, v_{j+1},\ldots, v_n$. The space of smooth sections of a bundle $E$ is denoted by $\Gamma (E)$.

\subsection{Spin$^c$ structures on manifolds.}\label{prelim_spinc}

We consider an oriented Riemannian manifold  $(M^n, g)$  of dimension $n\geq 2$  without boundary and denote by $\SO (M)$ the 
$\SO_n$-principal bundle over $M$ of positively oriented orthonormal frames. A $\Spinc$ structure of $M$ is given by an $\cercle^1$-principal bundle $(\cercle^1 M ,\pi,M)$ of some Hermitian line bundle $L$ and a $\Spin_n^c$-principal bundle $(\Spinc M,\pi,M)$ which is a $2$-fold covering of the $\SO_n\times\cercle^1$-principal bundle $\SO (M)\times_{M}\cercle^1 M$ compatible with the group covering
$$0 \longrightarrow \ZZ_2 \longrightarrow \Spin_n^c = \Spin_n\times_{\ZZ_2}\cercle^1 \longrightarrow \SO_n\times\cercle^1 \longrightarrow 0.$$
The bundle $L$ is called the auxiliary line bundle associated with the $\Spinc$ structure. If $A: T(\cercle^1 M)\longrightarrow \i\RR$ is 
a connection 1-form on $\cercle^1 M$, its (imaginary-valued) curvature will be denoted by $F_A$, whereas we shall define a real 
$2$-form $\Omega$ on $\cercle^1 M$ by $F_A= \i\Omega$. We know
that $\Omega$ can be viewed as a real valued 2-form on $M$ \cite{6, koba1}. In this case, $\i\Omega$ is the curvature form of the auxiliary line bundle $L$ \cite{6, koba1}.\\

Let $\Sigma M := \Spinc M \times_{\rho_n} \Sigma_n$ be the associated spinor bundle where $\Sigma_n = \C^{2^{[\frac n2]}}$ and $\rho_n : \Spin_n^c
\longrightarrow  \End(\Sigma_{n})$ the complex spinor representation \cite{6, LaMi}. 
A section of $\Sigma M$ will be called a spinor field. This complex vector bundle is naturally endowed
 with a Clifford multiplication, denoted by ``$\cdot$'', $\cdot : \C l(TM) \longrightarrow \End(\Sigma M)$
 which is a fiber preserving algebra morphism, and with a natural Hermitian scalar product $\< . , .\>$ 
compatible with this Clifford multiplication \cite{6, hijazi4}. If such data are given, one can canonically define a covariant derivative 
$\nabla$ on $\Sigma M$ that is locally given by \cite{6, hijazi4, 2ana}:
\begin{eqnarray}\label{nnnabla}
 \nabla_X \psi = X(\psi) + \frac 14 \sum_{j=1}^n e_j\cdot \nabla_X^M e_j\cdot\psi + \frac \i2 A(s_*(X))\psi,
\end{eqnarray}
where $X \in \Gamma(TM)$, $\nabla^M$ is the Levi-Civita connection on $M$, $\psi = [\widetilde{b\times s}, \sigma]$ is a locally defined spinor field, $b=(e_1, \ldots, e_n)$ is a local oriented orthonormal tangent frame over an open set $U\subset M$, 
$s : U \longrightarrow \cercle^1M|_U$ is a local section of $\cercle^1M$,  $\widetilde{b\times s}$ is the lift of the local section $b\times s :
 U \longrightarrow \SO (M) \times_M \cercle^1 M|_U$ to the $2$-fold covering and
  $X(\psi) = [\widetilde{b\times s}, X(\sigma)]$. \\

The Dirac operator, acting on $\Gamma(\Sigma M)$, is a first order elliptic operator locally given by  $D =\sum_{j=1}^n e_j \cdot \nabla_{e_j},$
where $\{e_j\}_{j=1, \ldots, n}$ is any local orthonormal frame on $M$. An important tool when examining the Dirac operator on $\Spinc$ manifolds is the Schr\"{o}dinger-Lichnerowicz formula \cite{6}:
\begin{eqnarray}
D^2 = \nabla^*\nabla + \frac 14 S\; \id_{\Gamma (\Sigma M)}+ \frac{\i}{2}\Omega\cdot,
\label{SL}
\end{eqnarray}
where $S$ is the scalar curvature of $M$, $\nabla^*$ is the adjoint of $\nabla$ with respect to the $L^2$-scalar product and $\cdot$ is the extension of the Clifford multiplication to differential forms. 
The Ricci identity is given, for all $X\in \Gamma(TM)$,  by 
\begin{eqnarray*}
\sum_{j=1}^n e_j\cdot \mathcal R(e_j, X)\psi = \frac 12 \Ric(X)\cdot\psi - \frac \i2 (X\lrcorner \Omega)\cdot\psi,
\end{eqnarray*}
for any spinor field $\psi$. Here, $\mathrm{\Ric}$ (resp. $\mathcal R$) denotes the Ricci tensor 
of $M$ (resp. the $\Spinc$ curvature associated with the connection $\nabla$), and $\lrcorner$ is the interior product.\\

Let $\omega_{\mathbb{C}}=\i^{[\frac{n+1}{2}]} e_1\wedge \ldots \wedge e_n$ be the complexified volume element. The Clifford multiplication extends to differential forms so $\omega_{\mathbb C}$  can act on spinors. If $n$ is odd, the volume element $\omega_{\mathbb C}$ acts as the identity on the spinor bundle.
If $n$ is even,  $\omega_{\mathbb C}^2=1$. Thus, by the action of the complex volume element on the spinor bundle  decomposes into the  eigenspaces $\Sigma^{\pm} M$ corresponding to the $\pm 1$ eigenspaces, the {\it positive} (resp. {\it negative}) spinors
\cite{6, hijazi4, 2ana}. If $\psi=\psi_+ + \psi_-$ for $\psi_{\pm}\in \Gamma(\Sigma^{\pm} M)$, we set $\bar{\phi}=\psi_+ - \psi_-$.  Summarizing the action of the volume form we have
\begin{align}\label{vol_form}
\omega_{\mathbb C}\cdot \psi=\left\{\begin{matrix} \bar{\psi} & \text{for\ } n \text{\ even}\\
\psi & \text{for\ } n \text{\ odd.}
\end{matrix}
\right.
\end{align}
Moreover, we recall that by direct calculation one sees immediately that 
\begin{equation}\label{prod_im_re} \<\delta\cdot \psi, \psi\> = (-1)^{\frac{k(k+1)}{2}} \overline {\<\delta\cdot \psi, \psi\>}\end{equation} for a $k$-form $\delta$ and a spinor field $\psi$.\\ 

Furthermore, it is well known that a $\spin$ structure can be viewed as a $\spinc$ structure with a trivial auxiliary line bundle endowed with the trivial connection. Of course, $\Spinc$ manifolds are not in general $\spin$ manifolds -- e.g. the complex projective space $\mathbb C P^2$ is $\spinc$ but not $\spin$.  However, a $\Spinc$ structure on a simply
connected Riemannian manifold $M$ with trivial auxiliary bundle $L$ is canonically identified with a $\spin$ structure. Moreover, if
 the connection defined on the trivial auxiliary line bundle is flat, then  $\nabla$ on the $\spinc$ bundle $\Sigma M$ corresponds to $\nabla^{'}$
on the $\Spin$ bundle $\Sigma^{'}M$, i.e., we have a global section on $L$ which can be chosen parallel  \cite[Lemma $2.1$]{moro}. In this case, \eqref{nnnabla} becomes
\begin{eqnarray}\label{nnabla}
\nabla'_X \psi =  \nabla_X \psi = X(\psi) + \frac 14 \sum_{j=1}^n e_j\cdot \nabla^M_Xe_j\cdot\psi. 
\end{eqnarray}

Let $(M^n, g)$ be a Riemannian $\Spinc$ manifold with $\Spinc$ bundle $\Sigma M$ and auxiliary bundle $L$. Let now $(M^n,g)$ be equipped with another $\Spinc$ structure, and let $\Sigma' M$ (resp. $L'$)  the corresponding $\Spinc$ bundle (resp. auxiliary bundle). Then there is always a complex line bundle $\mathcal{D}$ such that $\Sigma^{'}M = \Sigma M \otimes {\mathcal D}$ and $L^{'} = L \otimes {\mathcal D}^2$.
In particular, if $(M,g)$ is $\spin$ and $\Sigma'M$ denotes its spinor bundle and  $L'$ the trivial line bundle. Then, ${\mathcal D}^2 = L^{-1}$. Thus $\Sigma M = \Sigma^{'} M \otimes L^{\frac 12}$. Even if $M$ is not $\spin$ this is still true locally. This essentially means that, while the spinor bundle and $L^{\frac 12}$ may not exist globally, their tensor product (the $\Spinc$ bundle) can be defined globally.  

\subsection {Conformal Killing vector fields}\label{conf-Kill} We denote by $\mathcal L_Vg$ the Lie derivative of the metric $g$ in direction of the
vector field $V$. A vector field $V$ is a {\it conformal
Killing field} if 
$\mathcal L_Vg = 2hg$  for a smooth real function h. By taking traces one obtains $ \mathrm{div} V=nh$. $V$ is {\it homothetic} if $h$ is a constant, and it is {\it isometric} if $\mathcal L_Vg = 0$. Moreover, $V$ is called {\it closed} if the corresponding $1$-form $w = g(V, .)$ is closed. H.-B. Rademacher proved that 

\begin{thm}\cite[Theorem~2]{rada}\label{rada_thm}
Let $(M^n, g)$ be a complete Riemannian manifold with a non-isometric conformal closed Killing vector field $V$, and let $N$ be the number of zeros of $V$. Then $N \geq 2$  and: 
\begin{enumerate}
\item If $N=2$, $M$ is conformally diffeomorphic to the standard sphere $\mathbb S^n$.
\item If $N=1$, $M$ is conformally diffeomorphic to the Euclidean space $\mathbb R^n$.
\item If $N=0$,  there exists a complete $(n - 1)$-dimensional Riemannian manifold $(F, g_F)$ and a smooth function $h: \mathbb R \longrightarrow \mathbb R^*$ such that the warped product $F \times_h \mathbb R$  is a
Riemannian covering of $M$, and the lift of $V$ is $h \frac{\partial}{\partial t}$ where $t$ denotes the coordinate of the $\mathbb R$-factor.
\end{enumerate}
\end{thm}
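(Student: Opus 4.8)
The plan is to use that a \emph{closed} conformal field is locally the gradient of a concircular function, reduce to the classical structure theory of such functions (going back to Tashiro and Obata), and then read off the three cases from completeness. \textbf{Step 1 (reduction to a concircular potential).} Since $w:=g(V,\cdot)$ is closed, it becomes exact on the universal cover $\tilde M$, which is again complete and carries the lifted non-isometric closed conformal field $\tilde V$; write $w=\d\phi$, so $\tilde V=\grad\phi$. From $\mathcal L_{\tilde V}g=2\,\mathrm{Hess}\,\phi$ together with $\mathcal L_{\tilde V}g=2hg$ we obtain $\mathrm{Hess}\,\phi=h\,g$ with $h=\tfrac1n\Delta\phi\not\equiv 0$. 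It suffices to classify $(\tilde M,\phi)$ and then descend; so I first argue on a manifold on which $w$ is exact.

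\textbf{Step 2 (first integrals and the warped-product normal form).} Differentiating $|V|^2=g(\grad\phi,\grad\phi)$ and using $\mathrm{Hess}\,\phi=hg$ gives $\grad(|V|^2)=2hV$; hence on $M_0:=\{V\neq0\}$ both $|V|^2$ and $h$ are functions of $\phi$, say $|V|^2=F(\phi)$ and $h=\tfrac12F'(\phi)=:\mathfrak h(\phi)$. The unit field $N:=V/|V|$ is the normal to the level sets of $\phi$; $\mathrm{Hess}\,\phi=hg$ forces $\nabla_NN=0$, so its flow lines are unit geodesics and the level sets are totally umbilic with umbilicity factor $h/|V|$. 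Fermi coordinates along this flow (arclength $r$, one level set $F_0$) then put $g$ into the warped form $g=\d r^2+\lambda(r)^2g_{F_0}$ on each component of $M_0$, with $\lambda^2=F(\phi)$, $\phi'=\lambda$, $\lambda'=h$. Invariantly: the rescaled metric $\bar g:=|V|^{-2}g$ satisfies $\mathcal L_V\bar g=0$, $\bar g(V,V)=1$ and $\bar g(V,\cdot)=|V|^{-2}\d\phi$ (closed), so $V$ is a closed unit Killing field for $\bar g$ and $(M_0,\bar g)$ is locally the Riemannian product $\RR\times F_0$, with $g=\lambda^2\bar g$.

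\textbf{Step 3 (the zeros, and the three cases).} Let $p$ be a zero of $V$ and $\gamma$ a unit geodesic from $p$. Then $u(r):=\phi(\gamma(r))$ solves the autonomous ODE $u''=\mathrm{Hess}\,\phi(\dot\gamma,\dot\gamma)=\mathfrak h(u)$, $u(0)=\phi(p)$, $u'(0)=0$. If $h(p)=\mathfrak h(\phi(p))=0$, uniqueness gives $u\equiv\phi(p)$ along every geodesic from $p$, hence $\phi$ constant on $M$ by completeness, contradicting $h\not\equiv0$; so $h(p)\neq0$ and $p$ is a nondegenerate critical point of $\phi$ (a strict minimum or maximum). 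Thus zeros are isolated, $M_0=M\setminus\{\text{zeros}\}$ is connected (as $n\geq2$), $\phi|_{M_0}$ is a submersion onto an open interval $I$ with connected fibre $F_0$, and $M_0\cong I\times_\lambda F_0$. An endpoint of $I$ is approached either at infinite $r$-distance (an end at infinity) or at finite $r$-distance with $\lambda\to0$ and the fibre collapsing to a zero; near such a collapse, smoothness of $g$ forces $F_0=(\mathbb S^{n-1},g_{\mathrm{can}})$ and $\lambda(r)\sim r$. Since $I$ has two endpoints, $V$ has at most two zeros. With two zeros, $M_0\cong(0,\ell)\times_\lambda\mathbb S^{n-1}$, $\lambda(0)=\lambda(\ell)=0$, $\lambda'(0)=1=-\lambda'(\ell)$, so $M\cong\mathbb S^n$; and $\bar g=\lambda^{-2}g=\d\tau^2+g_{\mathrm{can}}$ with $\tau=\int\d r/\lambda$ exhausting $\RR$ (both $\int_0\d r/r$ and $\int^\ell\d r/(\ell-r)$ diverge), i.e.\ the round cylinder, which conformally compactifies to the round $\mathbb S^n$. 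With one zero, $M_0\cong(0,\infty)\times_\lambda\mathbb S^{n-1}$, so $M\cong\RR^n$ and the same rescaling identifies the conformal class with that of $\RR^n$. With no zero, the warped structure is global on $\tilde M$: $\tilde M=\RR\times_\lambda F_0$ with $\lambda>0$, $r$ ranging over $\RR$ by completeness, and the lift of $V$ equal to $\lambda\,\partial_r$; in the theorem's notation this is the Riemannian covering $F\times_h\RR$ of $M$ with lift of $V$ equal to $h\,\partial_t$. Finally, in the first two cases the deck group of $\tilde M\to M$ acts freely by conformal maps commuting with $\tilde V$, hence fixes each of the $\le2$ zeros, so it is trivial and $M$ itself is conformally $\mathbb S^n$ resp.\ $\RR^n$.

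\textbf{Main obstacle.} The local normal form of Step 2 is routine; the delicate points are (i) ruling out $h=0$ at a zero, and hence making every zero an isolated nondegenerate critical point — precisely where completeness and the autonomy of $u''=\mathfrak h(u)$ are used — and (ii) the \emph{global} identifications in Step 3: verifying that the rescaled coordinate $\tau$ exhausts $\RR$ (so that the cylinder closes up to a genuine $\mathbb S^n$- or $\RR^n$-conformal structure rather than one with a finite-distance conformal boundary), reading off $\lambda(r)\sim r$ and $F_0=\mathbb S^{n-1}$ from smoothness at a collapse, and controlling the deck group. I expect (ii) to carry the real technical weight.
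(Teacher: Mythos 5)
First, a point of order: the paper does not prove this statement at all --- it is quoted (up to the obvious typo that $N\geq 2$ should read $N\leq 2$, as your argument correctly shows ``at most two'') from Rademacher's paper [rada], so there is no internal proof to compare against. Your overall route --- a closed conformal field is locally $\grad\phi$ with $\mathrm{Hess}\,\phi=hg$, warped-product normal form off the zero set, then a case analysis by the number of zeros --- is exactly the Obata--Tashiro--K\"uhnel--Rademacher structure theory on which the cited proof rests, so the strategy is the right one.

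However, Step 3(i) as written has a genuine gap. You deduce $h(p)\neq 0$ at a zero $p$ by viewing $u=\phi\circ\gamma$ as a solution of the autonomous ODE $u''=\mathfrak{h}(u)$ and invoking uniqueness. But the identity $h=\mathfrak{h}(\phi)$ was only established where $\d\phi\neq 0$ (and only locally: different components of a level set could a priori give different branches of $\mathfrak{h}$), and uniqueness for $u''=\mathfrak{h}(u)$, $u(0)=\phi(p)$, $u'(0)=0$ needs $\mathfrak{h}$ to be Lipschitz at the critical value $\phi(p)$ --- exactly the sort of nondegeneracy you are trying to prove, so the argument is close to circular. The standard repair is a linear ODE along geodesics: from $\nabla_XV=hX$ one gets $R(X,Y)V=(Xh)Y-(Yh)X$, hence $\d h=-\tfrac{1}{n-1}\Ric(V,\cdot)$. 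Along a unit geodesic $\gamma$ with $\gamma(0)=p$ and $V(p)=0$ one has $V(\gamma(r))=H(r)\,\gamma'(r)$ with $H(r)=\int_0^r h(\gamma(s))\,\d s$, and $H''(r)=-\tfrac{1}{n-1}\Ric(\gamma',\gamma')\,H(r)$. If in addition $h(p)=0$, then $H(0)=H'(0)=0$, so $H\equiv 0$ along every geodesic from $p$; by completeness $\exp_p$ is onto, whence $V\equiv 0$ and $h\equiv 0$, contradicting non-isometry. The same formula, now with $H'(0)=h(p)\neq 0$, shows the zeros are isolated nondegenerate extrema of $\phi$, which is all you need, and it avoids any regularity hypothesis on $\mathfrak{h}$. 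With this replacement the remaining weak points are the ones you yourself flagged under (ii): connectedness of the fibres and globalization of the warped product on $M_0$, the smooth closing-up at a collapse (round cross-section, $\lambda(r)\sim r$), and the deck-group step --- note a deck transformation preserves $\phi$ only up to an additive constant, so one must explicitly exclude that it swaps the two zeros, e.g.\ because they are the strict maximum and minimum of $\phi$; these are standard but are asserted rather than proved in your sketch.
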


\section {Spin$^c$ Complex Generalized Killing spinors}\label{sec_compl}

In this section, we want to establish general properties of  complex generalized Killing spinors, i.e. a spinor satisfying \eqref{CGKSspin}. In particular, we will show that in dimension $n\geq 4$, the Killing function $K$ is already purely real or purely imaginary. First, let us collect some general facts on Killing spinors.

\begin{lemma}\label{gen}
Let $\psi$ be  a complex generalized Killing  spinor on a Riemannian $\Spinc$ manifold $(M^n, g)$. Let $K=a+\i b$ be the corresponding Killing function. Then
\begin{enumerate}[label=(\rm{\roman{*}}), ref=\rm{\roman{*}}]
\item\label{Lich_compl} $n(n-1) K^2 \psi -(n -1) \d K\cdot  \psi = \frac 14 S \psi + \frac \i2 \Omega \cdot\psi$
\item\label{dabspin} $\<\d a\cdot \psi, \psi\> = 2n ab\i |\psi|^2$
\item\label{ric_kill}
$ \frac{1}{2}(\Ric (X)- \i X \lrcorner \Omega )\cdot \psi = \nabla K\cdot X\cdot \psi + nX(K)\psi +2(n-1)K^2 X\cdot \psi$
\item\label{gen_pos} $\psi$ has no zeros.
\end{enumerate}
\end{lemma}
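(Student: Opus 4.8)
The plan is to extract all four statements directly from the defining equation \eqref{CGKSspin}, feeding it successively into the Schr\"odinger--Lichnerowicz formula \eqref{SL}, into the Ricci identity recalled in Section~\ref{prelim}, and into the reality identity \eqref{prod_im_re}. Throughout I would compute at a point $p\in M$ in a local orthonormal frame $(e_j)$ with $\nabla^M_{e_j}e_j=0$ at $p$, and repeatedly use the elementary Clifford identities $\sum_j e_j\cdot e_j=-n$ and $\sum_j e_j\cdot X\cdot e_j=(n-2)X$, together with the identification of a $1$-form $\d f$ with its metric-dual gradient, so that $\sum_j e_j(f)\,e_j\cdot\psi=\d f\cdot\psi$. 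For \eqref{Lich_compl}, I would first observe that \eqref{CGKSspin} gives $D\psi=\sum_j e_j\cdot\nabla_{e_j}\psi=K\sum_j e_j\cdot e_j\cdot\psi=-nK\psi$. Differentiating once more, using $\nabla_{e_j}(e_j\cdot\psi)=e_j\cdot\nabla_{e_j}\psi=-K\psi$ at $p$, yields $D^2\psi=n^2K^2\psi-n\,\d K\cdot\psi$, while differentiating $\nabla_{e_j}\psi$ directly gives $\nabla^*\nabla\psi=-\sum_j\nabla_{e_j}\nabla_{e_j}\psi=nK^2\psi-\d K\cdot\psi$. Substituting both expressions into \eqref{SL} and rearranging (using $n^2-n=n(n-1)$) produces exactly \eqref{Lich_compl}.

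For \eqref{dabspin}, I would pair \eqref{Lich_compl} with $\psi$ in the Hermitian product. The key point is that by \eqref{prod_im_re} the number $\<\delta\cdot\psi,\psi\>$ is purely imaginary both for the real $1$-form $\delta=\d a$ (writing $\d K=\d a+\i\,\d b$) and for the real $2$-form $\delta=\Omega$, since $(-1)^{k(k+1)/2}=-1$ for $k=1$ and for $k=2$; hence the right-hand side $\tfrac14 S|\psi|^2+\tfrac\i2\<\Omega\cdot\psi,\psi\>$ is real. Writing $\<\d a\cdot\psi,\psi\>=\i u$ and $\<\d b\cdot\psi,\psi\>=\i v$ with $u,v$ real, so that $\<\d K\cdot\psi,\psi\>=\i u-v$, and expanding $K^2=a^2-b^2+2\i ab$, I would compare the imaginary parts of the resulting scalar identity: the only surviving terms are $2n(n-1)ab|\psi|^2$ and $-(n-1)u$, so $u=2nab|\psi|^2$ (using $n\geq 2$), which is \eqref{dabspin}.

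For \eqref{ric_kill}, I would start from the Ricci identity $\sum_j e_j\cdot\mathcal R(e_j,X)\psi=\tfrac12\Ric(X)\cdot\psi-\tfrac\i2(X\lrcorner\Omega)\cdot\psi$ and compute its left-hand side from \eqref{CGKSspin}. Differentiating $\nabla_X\psi=KX\cdot\psi$ twice, all $\nabla^M$-terms cancel (torsion-freeness) and one gets $\mathcal R(Y,X)\psi=Y(K)\,X\cdot\psi-X(K)\,Y\cdot\psi+K^2(X\cdot Y-Y\cdot X)\cdot\psi$. Setting $Y=e_j$, Clifford-multiplying on the left by $e_j$ and summing: the first two terms give $\nabla K\cdot X\cdot\psi+nX(K)\psi$, and the last term, using $\sum_j e_j\cdot X\cdot e_j=(n-2)X$ and $\sum_j e_j\cdot e_j\cdot X=-nX$, gives $2(n-1)K^2X\cdot\psi$. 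Comparing with the Ricci identity yields \eqref{ric_kill}. Finally, \eqref{gen_pos} is immediate: along any smooth curve $\gamma$ in $M$, equation \eqref{CGKSspin} reduces to the linear first-order ODE $\nabla_{\dot\gamma}\psi=K\dot\gamma\cdot\psi$ for $\psi$ along $\gamma$; by uniqueness of solutions, a zero of $\psi$ at one point would force $\psi\equiv0$ along every curve through it, hence identically on the connected manifold $M$, contradicting $\psi\neq0$.

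I expect the main obstacle to be purely a matter of bookkeeping rather than of ideas: getting every Clifford sign right in \eqref{ric_kill}, in particular fixing the curvature convention so that it matches the Ricci identity as stated and verifying $\sum_j e_j\cdot X\cdot e_j=(n-2)X$; and, in \eqref{dabspin}, making sure the sign $(-1)^{k(k+1)/2}$ in \eqref{prod_im_re} really is $-1$ for both the $1$-form $\d a$ and the $2$-form $\Omega$, which is what makes the whole right-hand side of \eqref{Lich_compl} real after pairing with $\psi$ and lets the imaginary-part comparison go through.
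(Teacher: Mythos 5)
Your proposal is correct and takes essentially the same route as the paper: parts \eqref{Lich_compl} and \eqref{dabspin} are obtained exactly as in the paper's proof (compute $D\psi=-nK\psi$, $D^2\psi$ and $\nabla^*\nabla\psi$, feed them into \eqref{SL}, then take the imaginary part of the Hermitian pairing of \eqref{Lich_compl} with $\psi$, using \eqref{prod_im_re}). For \eqref{ric_kill} and \eqref{gen_pos} the paper merely cites Herzlich--Moroianu and Lichnerowicz and says the real-$K$ proofs carry over; the curvature computation and the ODE-uniqueness argument you write out are precisely those arguments, so there is no gap.
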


\begin{proof} All the calculations will be carried out at a point $x\in M$ using a local orthonormal frame  $e_i$ with $[e_i,e_j]=0$ and $\nabla_{e_i} e_j=0$ at $x$. We calculate $D \psi = \sum_i e_i\cdot \nabla_{e_i}\psi = K \sum_i e_i\cdot e_i \cdot \psi = -nK\psi$. Thus, 
\begin{eqnarray*}
D^2 \psi = -n \d K + n^2 K^2 \psi.
\end{eqnarray*}
Moreover, using $ \nabla^* \nabla  = - \sum_{j=1}^n \nabla_{e_j} \nabla_{e_j}$, we deduce that 
\begin{eqnarray*}
\nabla^* \nabla \psi = -\d K\cdot\psi  + n K^2 \psi.
\end{eqnarray*}
Using the last two equations and the Schr\" odinger Lichnerowicz formula \eqref{SL}, we obtain \eqref{Lich_compl}.
Then, taking the imaginary part of the scalar product of \eqref{Lich_compl} $\psi$, we get that  $\<da\cdot\psi, \psi\> = 2n ab\i |\psi|^2$.
For the last two claims the corresponding proofs for $K$ real, i.e., 
\cite[Lemma~2.2]{8} for \eqref{ric_kill}   and \cite[Proposition~1]{lich} for \eqref{gen_pos}, carry over directly.
\end{proof}

Let $\omega_p$ be the $p$-form on $M$ given by 
$$w_p (X_1, \dots, X_p) := \<(X_1\wedge X_2 \wedge \dots \wedge X_p)\cdot\psi, \psi\>,$$
for any $X_1, X_2, \dots, X_p \in \Gamma(TM)$. These $p$-forms have been introduced in \cite{8} for  real generalized Killing $\Spinc$ spinors.  In this case, the vector field $V := \i\omega_1^{\flat}$ is a Killing vector field. We point out that for complex generalized Killing $\Spinc$ spinors, this is not the case, i.e., $\xi$ is not necessary a Killing vector field, cp. Section \ref{sec_im}.

\begin{lemma}\label{p-form-first}
The forms  $\omega_{4k+1}$ and $\omega_{4k+2}$ are imaginary-valued, but $\omega_{4k+3}$ and $\omega_{4k}$ are real-valued forms for all $k\geq 0$. Moreover,  we have for all $p\geq 0$
$$\d\omega_p = (K (-1)^p - \overline K) \omega_{p+1}.$$
In particular, for any  $k \geq 0$, 
\begin{eqnarray}\label{eq1}
\d b \wedge \omega_{2k+1} - 2ab \omega_{2k+2} &=0,\\
\label{eq2}
\d a \wedge \omega_{2k+2} + 2ab\i \omega_{2k+3} &= 0.
\end{eqnarray}
\end{lemma}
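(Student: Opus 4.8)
The plan is to separate the two assertions, the reality pattern of the $\omega_p$ and the differential identity, and then read off \eqref{eq1}--\eqref{eq2} from the latter by taking exterior derivatives once more.

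\emph{Reality of the forms.} Fix $X_1,\dots,X_p\in\Gamma(TM)$ and regard $\delta:=X_1\wedge\dots\wedge X_p$ as a $p$-form acting on spinors by Clifford multiplication. Then \eqref{prod_im_re} gives at once
\[
\omega_p(X_1,\dots,X_p)=\<\delta\cdot\psi,\psi\>=(-1)^{\frac{p(p+1)}2}\,\overline{\<\delta\cdot\psi,\psi\>}=(-1)^{\frac{p(p+1)}2}\,\overline{\omega_p(X_1,\dots,X_p)},
\]
so $\omega_p$ is real-valued precisely when $\tfrac{p(p+1)}2$ is even and imaginary-valued precisely when it is odd. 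Writing $p=4k+r$ with $r\in\{0,1,2,3\}$, one checks that $\tfrac{p(p+1)}2$ is even for $r\in\{0,3\}$ and odd for $r\in\{1,2\}$, which is exactly the stated pattern for $\omega_{4k},\omega_{4k+1},\omega_{4k+2},\omega_{4k+3}$.

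\emph{The formula for $\d\omega_p$.} I would argue pointwise. Both sides are $(p+1)$-forms, so it suffices to evaluate them on pairwise distinct members $e_{j_0},\dots,e_{j_p}$ of a local orthonormal frame $(e_i)$ that is synchronous at a point $x$ (as in the proof of Lemma~\ref{gen}); when two arguments coincide, both sides vanish by skew-symmetry. Using the torsion-free expression $\d\omega_p(X_0,\dots,X_p)=\sum_{i=0}^p(-1)^i(\nabla_{X_i}\omega_p)(X_0,\dots,\widehat{X_i},\dots,X_p)$ and abbreviating $\eta:=e_{j_0}\wedge\dots\wedge\widehat{e_{j_i}}\wedge\dots\wedge e_{j_p}$, the parallelism of the frame at $x$ yields
\[
(\nabla_{e_{j_i}}\omega_p)(e_{j_0},\dots,\widehat{e_{j_i}},\dots,e_{j_p})=\<\eta\cdot\nabla_{e_{j_i}}\psi,\psi\>+\<\eta\cdot\psi,\nabla_{e_{j_i}}\psi\>.
\]
Now substitute the Killing equation \eqref{CGKSspin}, $\nabla_{e_{j_i}}\psi=K\,e_{j_i}\cdot\psi$, and move the vector $e_{j_i}$ out of the second slot using $\<X\cdot\phi,\chi\>=-\<\phi,X\cdot\chi\>$ (compatibility of $\<.,.\>$ with Clifford multiplication); the right-hand side becomes $K\<\eta\cdot e_{j_i}\cdot\psi,\psi\>-\overline K\<e_{j_i}\cdot\eta\cdot\psi,\psi\>$. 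Since $e_{j_i}$ is orthogonal to every factor of $\eta$, Clifford anticommutation lets us reinsert $e_{j_i}$ into the missing slot at the expense of $(-1)^{p-i}$, respectively $(-1)^i$, which turns this into $\bigl(K(-1)^{p-i}-\overline K(-1)^i\bigr)\,\omega_{p+1}(e_{j_0},\dots,e_{j_p})$. Multiplying by $(-1)^i$ and summing over $i$ then yields $\d\omega_p=(K(-1)^p-\overline K)\,\omega_{p+1}$. (This extends to complex $K$ the computation of the real case in \cite{8}.)

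\emph{Deduction of \eqref{eq1} and \eqref{eq2}.} These follow from $\d^2=0$. Since $K+\overline K=2a$ and $K-\overline K=2\i b$, the formula gives $\d\omega_{2k}=2\i b\,\omega_{2k+1}$, $\d\omega_{2k+1}=-2a\,\omega_{2k+2}$ and $\d\omega_{2k+2}=2\i b\,\omega_{2k+3}$. Hence
\[
0=\d^2\omega_{2k}=2\i\bigl(\d b\wedge\omega_{2k+1}+b\,\d\omega_{2k+1}\bigr)=2\i\bigl(\d b\wedge\omega_{2k+1}-2ab\,\omega_{2k+2}\bigr),
\]
which is \eqref{eq1}, and
\[
0=\d^2\omega_{2k+1}=-2\bigl(\d a\wedge\omega_{2k+2}+a\,\d\omega_{2k+2}\bigr)=-2\bigl(\d a\wedge\omega_{2k+2}+2\i ab\,\omega_{2k+3}\bigr),
\]
which is \eqref{eq2}. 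The only genuinely delicate point is the sign bookkeeping in the middle step: one must be consistent about the skew-symmetry convention for Clifford multiplication by a vector and about the anticommutation signs incurred when the differentiation direction is reinserted into the wedge, and then verify that the alternating sum over $i$ produces exactly the coefficient $K(-1)^p-\overline K$ in the normalizations fixed in Section~\ref{prelim}; everything else is formal.
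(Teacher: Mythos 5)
Your proposal follows the paper's proof essentially verbatim: the reality pattern comes from \eqref{prod_im_re}, the derivative formula is computed pointwise in a synchronous orthonormal frame by inserting the Killing equation \eqref{CGKSspin} and using skew-adjointness of Clifford multiplication by a vector, and \eqref{eq1}--\eqref{eq2} are obtained by differentiating $\d\omega_{2k}=2\i b\,\omega_{2k+1}$ and $\d\omega_{2k+1}=-2a\,\omega_{2k+2}$, exactly as in the paper (your use of $\sum_i(-1)^i\nabla_{X_i}\omega_p$ instead of the Cartan bracket formula is an immaterial variant). One bookkeeping caveat at precisely the point you yourself flagged as delicate: with the unnormalized convention you quote, each of the $p+1$ summands equals $(K(-1)^p-\overline K)\,\omega_{p+1}(e_{j_0},\dots,e_{j_p})$ after your (correct) sign analysis, so the sum is $(p+1)(K(-1)^p-\overline K)\,\omega_{p+1}$ rather than $(K(-1)^p-\overline K)\,\omega_{p+1}$; the stated coefficient emerges only with the $\tfrac{1}{p+1}$-normalized exterior derivative, which is in effect what the paper uses, since its analogous computation carries the prefactor $(p+1)\,\d\omega_p(e_1,\dots,e_{p+1})$ on the left-hand side. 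So you reproduce the paper's argument, normalization convention included; in either convention the passage to \eqref{eq1}--\eqref{eq2} via $\d^2=0$ and the Leibniz rule goes through as you wrote it, only the numerical constants (which become degree-dependent in the unnormalized convention) would shift.
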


\begin{proof} By \eqref{prod_im_re}, $\omega_{4k+1}$ and $\omega_{4k+2}$ are imaginary-valued; $\omega_{4k+3}$ and $\omega_{4k+4}$ are real-valued forms.  We consider a local orthonormal frame $\{e_1, \dots, e_n\}$ in a neighbourhood of $x\in M$ such that   $[e_i ,e_j] =0$ and  $\nabla_{e_i} {e_j} =0$ at $x$. Then
\begin{align*}
(&p+1) \d \omega_p (e_1,\ldots ,e_{p+1})\\
&= \sum_{i=1} ^{p+1} (-1)^{i-1} e_i \Big(\omega_p (e_1,...,\hat{e}_i,...,e_{p+1}) \Big) + \sum_{i < j} (-1)^{i+j} \omega_p \Big([e_i ,e_j ],e_1,...,\hat{e}_i,...,\widehat {e_j},...,e_{p+1}\Big)\\ 
&=\sum_{i=1} ^{p+1} (-1)^{i-1}  \Big[\<\nabla_{e_i} (e_1 \cdot \ldots\hat{e}_i  \cdot \ldots \cdot e_{p+1}\cdot\psi),\psi\>+ \<e_1 \cdot \ldots\cdot\hat{e}_i\cdot \ldots\cdot e_{p+1}\cdot\psi,\nabla_{e_i }\psi\>\Big]\\
&=\sum_{i=1} ^{p+1} (-1)^{i-1}  \Big[\<e_1 \cdot \ldots \cdot \hat{e}_i \cdot \ldots\cdot e_{p+1}\cdot\nabla_{e_i} \psi,\psi\> +\< e_1 \cdot \ldots \cdot \hat{e}_i \cdot \ldots \cdot e_{p+1}\cdot\psi, K e_i \cdot\psi\>\Big]\\
&=\sum_{i=1} ^{p+1} (-1)^{i-1} \Big[ \< e_1 \cdot ... \cdot \hat{e}_i\cdot ... \cdot e_{p+1} \cdot (K e_i \cdot\psi),\psi\>  - \overline K \< e_i \cdot\Big (e_1 \cdot ... \cdot\hat{e}_i\cdot ... \cdot e_{p+1}\Big)\cdot\psi,\psi\>\Big]\\
 &= (p+1)(K(-1)^p - \overline K) \omega_{p+1}.
\end{align*}
 Thus, we get
\begin{equation*}
 \left\{
\begin{array}{rcl}
\d\omega_{2k}&=&2\i b \omega_{2k+1},\\
\d\omega_{2k+1}&=&-2 a \omega_{2k+2}.
\end{array}\right.
\end{equation*}
After taking the differential of the last two equalities, we obtain $\d b \wedge \omega_{2k+1} - 2ab \omega_{2k+2} =0$ and  $\d a \wedge \omega_{2k+2} + 2ab \i \omega_{2k+3} = 0$.
\end{proof}

If $M$ is even dimensional, we can use the decomposition of the spinor bundle, see  \eqref{vol_form} and above, to define another sequence of $p$-forms on $M$ by 
$$\overline\omega_p (X_1, \ldots, X_p) := \<X_1\cdot X_2 \cdot \ldots \cdot X_p\cdot\psi, \overline\psi\>,$$
for $X_1, X_2, \dots, X_p \in \Gamma(TM)$.

\begin{lemma}
If $n$ is even, the $p$-form $\overline \omega_p$ satisfies 
$$\d\overline \omega_p = (K (-1)^p + \overline K) \overline\omega_{p+1}.$$
In particular, for any  $k \geq 0$, 
\begin{eqnarray*}
\d b \wedge \overline\omega_{2k+2} + 2ab \overline\omega_{2k+3} =0,
\end{eqnarray*}
\begin{eqnarray}\label{eq2-bar}
\d a \wedge \overline\omega_{2k+1} - 2ab\i \overline\omega_{2k+2} = 0.
\end{eqnarray}
\end{lemma}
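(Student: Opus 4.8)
The plan is to mimic the computation in the proof of Lemma~\ref{p-form-first} but now pairing against $\overline\psi$ instead of $\psi$, the only new ingredient being how the covariant derivative interacts with the bar operation. First I would record the basic fact that, since $\omega_{\mathbb C}$ is parallel and $\omega_{\mathbb C}\cdot\psi = \bar\psi$ by \eqref{vol_form}, we have $\nabla_X\overline\psi = \overline{\nabla_X\psi} = K\, X\cdot\overline{\psi}$ — wait, one must be careful: Clifford multiplication by a vector anticommutes with $\omega_{\mathbb C}$ in even dimensions (since $\dim$ is even), so $X\cdot\bar\psi = -\overline{X\cdot\psi}$. Hence from $\nabla_X\psi = K X\cdot\psi$ one gets $\nabla_X\overline\psi = K\,\omega_{\mathbb C}\cdot(X\cdot\psi) = -K\, X\cdot\overline\psi$; in other words $\overline\psi$ is a complex generalized Killing spinor with Killing function $-K$. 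That sign flip relative to $\psi$ is exactly what will turn the $-\overline K$ appearing in Lemma~\ref{p-form-first} into $+\overline K$ here.

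Next I would run the same Cartan-formula calculation at a point $x$ with a synchronous frame $e_i$ satisfying $[e_i,e_j]=0$ and $\nabla_{e_i}e_j = 0$ at $x$. Writing $(p+1)\d\overline\omega_p(e_1,\dots,e_{p+1})$ as the alternating sum of $e_i\big(\overline\omega_p(e_1,\dots,\hat e_i,\dots,e_{p+1})\big)$, the bracket terms drop at $x$, and differentiating the Hermitian pairing produces two contributions: one with $\nabla_{e_i}\psi = K e_i\cdot\psi$ in the first slot, and one with $\nabla_{e_i}\overline\psi = -K e_i\cdot\overline\psi$ in the second slot. In the first contribution I move $e_i\cdot$ past the $p$-fold Clifford product of the remaining $e_j$'s, picking up the sign $(-1)^p$; in the second, since $\langle\cdot,\cdot\rangle$ is anti-linear in the second slot, the factor $-K$ becomes $-\overline K$, and moving the $e_i$ produced by $\nabla_{e_i}\overline\psi$ to the front gives another sign. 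Collecting, the coefficient assembles to $K(-1)^p + \overline K$ times $\overline\omega_{p+1}$, exactly as claimed. The main obstacle — really the only place where care is needed — is bookkeeping these Clifford-commutation signs together with the extra sign coming from $\overline\psi$ solving the Killing equation with $-K$; everything else is identical to Lemma~\ref{p-form-first}.

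Finally I would specialize. For $p = 2k+1$ one gets $\d\overline\omega_{2k+1} = (-K+\overline K)\overline\omega_{2k+2} = -2\i a\,\overline\omega_{2k+2}$ (using $K-\overline K = 2\i b$, $K+\overline K = 2a$), and for $p = 2k+2$ one gets $\d\overline\omega_{2k+2} = (K+\overline K)\overline\omega_{2k+3} = 2a\,\overline\omega_{2k+3}$. Taking $\d$ of the first relation yields $0 = -2\i(\d a\wedge\overline\omega_{2k+2} + a\,\d\overline\omega_{2k+2}) = -2\i\,\d a\wedge\overline\omega_{2k+2} - 4\i a^2\overline\omega_{2k+3}$; after relabeling $2k+2 \to 2k+1$ in the first displayed identity this is \eqref{eq2-bar}, namely $\d a\wedge\overline\omega_{2k+1} - 2ab\i\,\overline\omega_{2k+2} = 0$ — hmm, the $b$ there means one should instead differentiate the $\d\overline\omega_{2k+2} = 2a\overline\omega_{2k+3}$ relation, or more precisely differentiate $\d\overline\omega_{2k} = -2\i a\,\overline\omega_{2k+1}$ is not available; rather differentiate $\d b$-type combinations. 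Concretely, differentiating $\d\overline\omega_{2k+1} = (-K+\overline K)\overline\omega_{2k+2}$ with $-K+\overline K = -2\i b$ gives $0 = -2\i\,\d b\wedge\overline\omega_{2k+2} + (-2\i b)\,\d\overline\omega_{2k+2} = -2\i\,\d b\wedge\overline\omega_{2k+2} - 4\i ab\,\overline\omega_{2k+3}$, i.e. $\d b\wedge\overline\omega_{2k+2} + 2ab\,\overline\omega_{2k+3} = 0$; and differentiating $\d\overline\omega_{2k} = 2a\,\overline\omega_{2k+1}$ gives $0 = 2\,\d a\wedge\overline\omega_{2k+1} + 2a\,\d\overline\omega_{2k+1} = 2\,\d a\wedge\overline\omega_{2k+1} - 4\i ab\,\overline\omega_{2k+2}$, i.e. $\d a\wedge\overline\omega_{2k+1} - 2ab\i\,\overline\omega_{2k+2} = 0$, which is \eqref{eq2-bar}. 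This step is purely formal, just $\d^2 = 0$ applied to the established first-order identities.
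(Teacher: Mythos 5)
Your proposal is correct and follows essentially the same route as the paper: you establish $\nabla_X\overline\psi=-K\,X\cdot\overline\psi$ (the paper deduces this from Clifford multiplication by vectors swapping $\Sigma^{\pm}M$, you from $\omega_{\mathbb C}$ being parallel and anticommuting with vectors in even dimension, which is the same fact), rerun the computation of Lemma~\ref{p-form-first} with the pairing against $\overline\psi$, and then apply $\d^2=0$ to the resulting identities $\d\overline\omega_{2k}=2a\,\overline\omega_{2k+1}$ and $\d\overline\omega_{2k+1}=-2\i b\,\overline\omega_{2k+2}$. The only blemish is the momentary slip $-K+\overline K=-2\i a$, which you yourself catch and correct to $-2\i b$ before drawing the conclusions, so the final argument matches the paper's.
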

\begin{proof} For $X\in \Gamma(TM)$ the Clifford multiplication $X\cdot$ is a map from $\Gamma(\Sigma^\pm M)$ to $\Gamma(\Sigma^\mp M)$. Thus, $\nabla_X\overline \psi = - K X \cdot \overline \psi$.  Now we can proceed as in Lemma~\ref{p-form-first} and obtain 
\begin{eqnarray*}
\d \overline\omega_p (e_1,\ldots ,e_{p+1}) &= 
 (K (-1)^p + \overline K)\overline \omega_{p+1}.
\end{eqnarray*}
Thus, we get
\begin{equation*}
 \left\{
\begin{array}{rcl}
\d\overline\omega_{2k}&=&2a \overline\omega_{2k+1},\\
\d\overline\omega_{2k+1}&=&-2 \i b \overline\omega_{2k+2}.
\end{array}\right.
\end{equation*}
Taking the differential, we obtain $\d b \wedge \overline\omega_{2k+2} + 2ab \overline\omega_{2k+3} =0$ and  $\d a\wedge \overline\omega_{2k+1} - 2ab\i \overline\omega_{2k+2} = 0$.
\end{proof}

\begin{lemma} \label{ab0}
Let $(M^n, g)$ be a Riemannian $\spinc$ manifold carrying  a complex generalized Killing spinor with Killing function $K= a+\i b$. Then $ab=0$. 
\end{lemma}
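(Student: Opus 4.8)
The plan is to exploit the differential identities for the $p$-forms $\omega_p$ (and, in even dimensions, $\overline\omega_p$) together with the pointwise relation from Lemma~\ref{gen}\eqref{dabspin}, namely $\<\d a\cdot\psi,\psi\> = 2nab\i|\psi|^2$. The key observation is that $\omega_0 = |\psi|^2$ is a strictly positive function by Lemma~\ref{gen}\eqref{gen_pos}, and $\omega_1(X) = \<X\cdot\psi,\psi\>$ is the (imaginary-valued) $1$-form whose evaluation against $\grad a$ is controlled by \eqref{dabspin}. So I would first write $\<\d a\cdot\psi,\psi\> = \omega_1(\grad a) = 2nab\i|\psi|^2$, which already shows that if $ab \neq 0$ at a point then $\omega_1 \neq 0$ there.

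Next I would use the exterior-derivative formulas $\d\omega_0 = 2\i b\,\omega_1$ and $\d\omega_1 = -2a\,\omega_2$ from Lemma~\ref{p-form-first}. From the first, $2\,\d|\psi|^2 = 2\i b\,\omega_1$ wherever... wait, more carefully: $\d\omega_0 = 2\i b\,\omega_1$ gives $\omega_1 = \frac{1}{2\i b}\d|\psi|^2 = -\frac{\i}{2b}\d|\psi|^2$ at points where $b \neq 0$. Substituting into $\omega_1(\grad a) = 2nab\i|\psi|^2$ yields $-\frac{\i}{2b}\,g(\grad|\psi|^2,\grad a) = 2nab\i|\psi|^2$, i.e. $g(\grad|\psi|^2,\grad a) = -4nab^2|\psi|^2$. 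Simultaneously the first of the two consequences in Lemma~\ref{p-form-first}, equation \eqref{eq1} with $k=0$, reads $\d b\wedge\omega_1 - 2ab\,\omega_2 = 0$, and \eqref{eq2} with $k=0$ reads $\d a\wedge\omega_2 + 2ab\i\,\omega_3 = 0$; also $\d^2\omega_0 = 0$ gives $\d(b\,\omega_1)=\d b\wedge\omega_1 + b\,\d\omega_1 = \d b\wedge\omega_1 - 2ab\,\omega_2 = 0$, consistent. The cleaner route is probably: apply $\d$ to $\d\omega_1 = -2a\,\omega_2$ to get $0 = -2\,\d a\wedge\omega_2 - 2a\,\d\omega_2 = -2\,\d a\wedge\omega_2 - 2a(2\i b\,\omega_3)$, reproducing \eqref{eq2}. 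I would then contract these two-form and three-form identities with the appropriate vector fields ($\grad a$, $\grad b$, $V:=\i\omega_1^\flat$) to derive, on the open set $\{ab\neq 0\}$, an algebraic contradiction with $|\psi|^2 > 0$ — most directly by showing $\grad a$ and $\grad b$ must simultaneously vanish there, forcing $a,b$ locally constant, and then feeding constant nonzero $a,b$ back into Lemma~\ref{gen}\eqref{Lich_compl} whose imaginary part gives $0 = 2nab|\psi|^2 \neq 0$.

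Actually the crispest argument uses \eqref{dabspin} and \eqref{Lich_compl} alone together with the positivity of $|\psi|^2$: on $U := \{x : a(x)b(x)\neq 0\}$, equation \eqref{dabspin} shows $\d a \neq 0$ on $U$; but combining the real and imaginary parts of \eqref{Lich_compl}, together with the scalar identity obtained by taking the real part of $\<\d K\cdot\psi,\psi\>$ via \eqref{prod_im_re} (which forces $\<\d a\cdot\psi,\psi\>$ and $\<\d b\cdot\psi,\psi\>$ to lie in prescribed real/imaginary lines), one can solve for $S$, $\Omega\cdot\psi$ and show $a$, $b$ both satisfy an overdetermined first-order system on $U$. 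I expect the main obstacle to be organizing the bookkeeping of which $\omega_p$-contractions vanish and extracting a genuinely pointwise contradiction rather than merely a differential constraint: the naive approach only shows $a$ and $b$ are \emph{locally constant} on $U$, and one still needs the separate (easy) step that constant nonzero $a,b$ are excluded by the imaginary part of \eqref{Lich_compl}, which reads $0 = 2nab|\psi|^2$. Once that is in place, $U$ is both open and (being the complement of the closed set where the locally-constant-hence-continuous extension forces $ab=0$) the argument closes: $U$ is empty, so $ab \equiv 0$ on all of $M$.
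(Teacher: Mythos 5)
Your attempt correctly assembles the available identities (positivity of $|\psi|^2$, the relation $\<\d a\cdot\psi,\psi\>=2nab\i|\psi|^2$ from Lemma~\ref{gen}\eqref{dabspin}, and the low-degree formulas $\d\omega_0=2\i b\,\omega_1$, $\d\omega_1=-2a\,\omega_2$, \eqref{eq1}, \eqref{eq2} for $k=0$), and the closing logic (if $a,b$ were locally constant on $U=\{ab\neq 0\}$ then \eqref{dabspin} with $\d a=0$ kills $ab$, so $U=\emptyset$) would be sound \emph{if} the intermediate claim were established. But that intermediate claim is exactly what is missing: you never show that contracting the $\omega_1,\omega_2,\omega_3$ identities with $\grad a$, $\grad b$, $V$ forces $\grad a=\grad b=0$ (or any other pointwise contradiction) on $U$; you explicitly defer this as ``bookkeeping'' and the alternative ``crispest argument'' paragraph (solving for $S$ and $\Omega\cdot\psi$ to get an ``overdetermined system'') is likewise only asserted. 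The low-degree relations you list are genuinely weaker than what is needed: they only produce differential constraints such as $g(\grad|\psi|^2,\grad a)=-4nab^2|\psi|^2$, not an algebraic one, and indeed your own observation that \eqref{dabspin} gives $\d a\neq 0$ on $U$ shows there is no easy route to local constancy from them.

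The paper's proof gets the pointwise contradiction by going to \emph{top} degree rather than staying with $\omega_1,\omega_2,\omega_3$: for $n$ odd it takes \eqref{eq2} with $k=\frac{n-3}{2}$, i.e. $\d a\wedge\omega_{n-1}=-2ab\i\,\omega_n$, and uses the action of the complex volume element \eqref{vol_form} to rewrite both sides, obtaining a second, independent evaluation $\<\d a\cdot\psi,\psi\>=2ab\i|\psi|^2$ with coefficient $2$; comparing with the coefficient $2n$ in \eqref{dabspin} and using $|\psi|^2>0$ gives $ab=0$ at every point. For $n$ even the same trick requires the auxiliary forms $\overline\omega_p$ and \eqref{eq2-bar} with $k=\frac{n-2}{2}$ (since the volume element sends $\psi$ to $\bar\psi$), a case your sketch mentions only in passing and never uses. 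So the missing idea is precisely this top-degree/volume-form comparison producing two different numerical coefficients for the same scalar quantity; without it, your argument does not close.
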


\begin{proof} Let $\psi$ denote the Killing spinor, and let ${e_1, e_2, \ldots, e_n}$ be a local orthonormal frame of $TM$. Firstly, assume that $n$ is odd and set $k = \frac{n-3}{2}$.  Equality~\eqref{eq2} for $k$ implies that
$$\d a \wedge \omega_{n-1} = -2ab \i \omega_n.$$ 
We calculate each term of this equation separately. First, we have  
\begin{align*}
(\d a \wedge \omega_{n-1}) (e_1, e_2,\ldots, e_n) &= \sum_{j=1}^n (-1)^{j+1} \d a(e_j) \omega_{n-1} (e_1, e_2,\ldots, \hat e_j, \ldots, e_n)\\
&=\sum_{j=1}^n  (-1)^{j+1} \d a(e_j) \<e_1\cdot e_2 \ldots\cdot\hat e_j \cdot \ldots\cdot e_n \cdot\psi, \psi\>.
\end{align*} 
Using \eqref{vol_form}, we get $(-1)^j \i^{[\frac{n+1}{2}]} e_1\cdot e_2\cdot\ldots\cdot \hat e_j\cdot \ldots\cdot e_n\cdot\psi = e_j \cdot\psi.$
Thus, we have 
\begin{align*}
(\d a \wedge \omega_{n-1}) (e_1, e_2,\ldots, e_n) &=  \sum_{j=1}^n  (-1)^{j+1} (-1)^{-j} \i^{-[\frac{n+1}{2}]} \d a(e_j) \<e_j \cdot\psi, \psi\> 
\\
&= - \i^{-[\frac{n+1}{2}]}  \sum_{j=1}^n \d a(e_j) \<e_j \cdot\psi, \psi\>  
\\
&=   - \i^{-[\frac{n+1}{2}]} \< \d a\cdot\psi, \psi\>.
\end{align*} 
On the other hand
\begin{align*}
-2ab \i \omega_n(e_1, e_2, \ldots, e_n) &= -2ab\i \<e_1\cdot e_2 \cdot\ldots\cdot e_n \cdot \psi, \psi\>  = -2ab \i \i^{-[\frac{n+1}{2}]} |\psi|^2.
\end{align*}
Thus, $$-2ab \i \i^{-[\frac{n+1}{2}]} |\psi|^2 =  - \i^{-[\frac{n+1}{2}]} \<\d a\cdot\psi, \psi\>.$$
Together with Lemma~\ref{gen}\eqref{dabspin} and~\ref{gen}\eqref{gen_pos}, we obtain that  $2ab \i   = 2n ab\i$. Hence, $ab = 0$. 

It remains the case that $n$ is even.  Then, \eqref{eq2-bar} for $k = \frac{n-2}{2}$ implies $\d a \wedge \overline \omega_{n-1} = 2ab \i \overline \omega_n$,
and an analogous calculation as in the first case gives again $ab = 0$.
\end{proof}
Now, we are able to prove Theorem~\ref{no_complex}. 
\begin{proof} [Proof of Theorem~\ref{no_complex}.] We prove the claim by contradiction, i.e. let $\psi$ be a Killing spinor to a Killing function $a+\i b$ where not both $a$ and $b$ vanish identically.
Set $\Omega:=\{x\in M\ |\ b(x)=0\}$. Then,   $\psi|_{\Omega}$ is a real generalized Killing spinor to the Killing function $a|_{\Omega}\not\equiv 0$. For $n\geq 4$, this implies that  $a$ has to be constant on $\Omega$ \cite[Theorem~1.1]{8}. But by Lemma~\ref{ab0},  we know that $ab = 0$. Thus, $a|_{M\setminus \Omega}=0$ which gives a contradiction to the smoothness of $a$.
\end{proof}

\begin{remark}\label{rem_low} We conjecture that complex generalized Killing $\Spinc$ spinors also do not exist in dimension $2$ and $3$. Even, if this turns out to be wrong, these examples are very artificial:  The manifold $M$ consists of two closed subsets $M_1$ and $M_2$ where $\psi|_{M_1}$ is a real generalized Killing spinor on $M_1$ to the Killing function $a$ and $\psi|_{M_2}$ is a real generalized Killing spinor on $M_1$ to the Killing function $\i b$. In particular, on $M_1\cap M_2$, we have $a=b=0$ and everything has to built such that it is smooth also over this ``boundary'' set.  For the imaginary spinor part on $M_2$,  this is clearly possible when taking e.g. a warped product as in Theorem~\ref{type_I} by choosing $k(t)$ carefully. But whether one can choose the real part such that the spinor has a good well-behaved zero set is still unclear.
\end{remark}


\section{Spin$^c$ Imaginary Generalized Killing  spinors}\label{sec_im}
 
On a Riemannian $\Spinc$ manifold $(M^n, g)$, we consider 
a imaginary generalized Killing spinor $\psi$ with Killing function $\i b$, where $b$ is a smooth real function that is not identically zero on $M$. Let $f := \vert\psi\vert^2$. Moreover, define the vector field $V$  by 
\begin{eqnarray}\label{def_V} g(V, X)  =  \i \< X\cdot\psi, \psi \>\quad \text{\ for\ all\ } X \in \Gamma(TM).\end{eqnarray}
As in the $\Spin$ case we get by direct computation, \cite[Section 3]{rada}
\begin{eqnarray}\label{conf_Kill}
\nabla f = 2b V, \quad \nabla_X V =2 b f X, \quad \mathcal {L}_V g = 4b f g, 
\end{eqnarray}
for all $X\in \Gamma(TM)$. Hence, the vector field $V$ is a non-isometric conformal closed Killing vector field, \cite[Section 2]{rada} and cf. Paragraph \ref{conf-Kill}. Moreover, the function $q_\psi := f^2 - \Vert V\Vert^2$ is non-negative constant  and \begin{eqnarray} \label{q0}            
\frac{1}{f} V\cdot\psi =- \i\psi \quad \text{\ for\ }q_\psi = 0.
\end{eqnarray}
The proof of this follows exactly the one  in the $\spin$ case \cite[Lemma~5 and below]{baum}.
 The spinor field $\psi$  is called of \emph{type} 
I (resp. II) if $q_\psi = 0$ (resp. $q_\psi > 0$).

\subsection{Imaginary Generalized Killing  spinors of type I}\label{type_I_sec}

We start with the type \rm{I} imaginary generalized Killing spinors. It turns out that one only obtains the obvious generalization of the corresponding $\Spin$ result \cite[Theorem~1a]{rada}.

\begin{thm}\label{type_I}
Let $(M^n, g)$ be a complete connected Riemannian $\spinc$ manifold admitting a imaginary generalized Killing spinor of
 type {\rm I} with Killing function $\i b$, $b\in C^\infty(M,\mathbb{R})$. Then, a Riemannian covering of $M$ is isometric to the warped product 
$ F\times_k \RR = (F^{n-1}  \times \RR, k(t)^2 h \oplus \d t^2 ),$ where $(F^{n-1}, h)$ is a complete Riemannian 
$\spin^c$ manifold admitting a non-zero parallel spinor field, and $k$ is a function on $t$. In particular, $f(t, x) = k(t)$ is also a function on $t$ alone and $b = \frac{f^{'}}{2f}$.  Moreover, every manifold that fulfills these conditions admits a imaginary generalized Killing spinor of type {\rm I}.
\end{thm}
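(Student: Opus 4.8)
The plan is to adapt the classical $\Spin$ argument of Baum--Rademacher to the $\spinc$ setting, using the structural equations in \eqref{conf_Kill}--\eqref{q0}. First I would analyze the conformal closed Killing field $V$: since $\psi$ is of type $\rm I$ we have $q_\psi = f^2 - \|V\|^2 = 0$, so $\|V\| = f = |\psi|^2 > 0$ everywhere (using Lemma~\ref{gen}\eqref{gen_pos}, which transfers to imaginary Killing functions), and in particular $V$ has no zeros. The function $h := 2bf$ satisfies $\nabla_X V = hX$, so $V$ is a non-isometric closed conformal Killing field with empty zero set. By Theorem~\ref{rada_thm}, case (3) applies: passing to a Riemannian covering $\widetilde M$, we may write $\widetilde M = F \times_k \RR$ with the lift $\widetilde V = k(t)\,\partial_t$ and metric $k(t)^2 h \oplus \d t^2$; reconciling $\nabla_{\partial_t}\widetilde V = k' \partial_t$ with $\nabla_{\widetilde V}\widetilde V = h \widetilde V$ (and using $|\widetilde V| = k$ after identifying the warping function with $k$) forces the normalizations, and one reads off $f = |\widetilde\psi|^2 = \|\widetilde V\| = k(t)$, so $f$ depends on $t$ alone, and $\nabla f = 2bV$ gives $k' \partial_t = 2b\, k\,\partial_t$, i.e. $b = \frac{f'}{2f} = \frac{k'}{2k}$.

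Next I would identify the spinorial content on the fiber. On the cover, equation \eqref{q0} reads $\frac{1}{k}\widetilde V \cdot \widetilde\psi = -\i\widetilde\psi$, i.e. $\partial_t \cdot \widetilde\psi = -\i\widetilde\psi$, so $\widetilde\psi$ lies in the $-\i$-eigenspace of Clifford multiplication by the unit normal $\partial_t$. The standard identification of the $\spinc$ bundle of a warped product $F\times_k\RR$ with (a copy or two copies of, depending on parity of $n$) the pullback of the $\spinc$ bundle $\Sigma F$ of $(F,h)$ — together with the Gauss-type formula relating $\nabla$ on $\widetilde M$ restricted to a slice $F\times\{t\}$ with the induced $\spinc$ connection $\nabla^F$ and the second fundamental form of the slice — then turns $\nabla_X\widetilde\psi = \i b\, X\cdot\widetilde\psi$ for $X$ tangent to $F$ into $\nabla^F_X \phi = 0$, where $\phi$ is the spinor on $F$ corresponding to $\widetilde\psi|_{F\times\{t_0\}}$ (the warping factors and the $b$-term cancel precisely because $b = k'/2k$ is the mean curvature term). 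The $t$-direction equation $\nabla_{\partial_t}\widetilde\psi = \i b\,\partial_t\cdot\widetilde\psi = \i b(-\i)\widetilde\psi = b\widetilde\psi$ is then an ODE along $\RR$ that is automatically consistent and shows how $\widetilde\psi$ propagates from one slice to another; combined with the fiber equation it gives that $(F,h)$ carries a nowhere-zero parallel $\spinc$ spinor. Completeness of $F$ follows from completeness of $M$ (hence of $\widetilde M = F\times_k\RR$) by a standard argument on warped products. For the converse, I would run this construction backwards: starting from a complete $\spinc$ manifold $(F^{n-1},h)$ with a parallel spinor $\phi$, any warped product $F\times_k\RR$ inherits a $\spinc$ structure, and defining $\widetilde\psi$ on each slice via $\phi$ scaled by an appropriate $t$-dependent factor (solving $\nabla_{\partial_t}\widetilde\psi = b\widetilde\psi$ with $b=k'/2k$) one checks directly that $\nabla_X\widetilde\psi = \i b\,X\cdot\widetilde\psi$ for all $X$, and $q_{\widetilde\psi}=0$ by construction; completeness of $F\times_k\RR$ needs $k$ bounded away from pathologies, which is part of "fulfills these conditions."

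The main obstacle I expect is the bookkeeping of the warped-product $\spinc$ identification and the Gauss formula for the $\spinc$ spinorial connection: unlike the $\spin$ case, the restriction of the $\spinc$ structure (and its auxiliary line bundle and connection) from $F\times_k\RR$ to a slice must be handled carefully, and the curvature $\Omega$ of the auxiliary bundle enters the structure equations (e.g. via Lemma~\ref{gen}\eqref{ric_kill}), so one must verify that $\Omega$ pulls back from $F$ with no $\d t$-component — this should follow from the fact that $V = \i\omega_1^\flat$ together with the structure equations pin down the geometry enough, or more directly from the $\spinc$ analog of Baum--Rademacher's integrability analysis. A secondary technical point is justifying that the whole ($\spinc$ bundle plus spinor) descends/lifts correctly under the covering and that the parallel spinor on $F$ is genuinely nowhere-vanishing (immediate from $f = k > 0$). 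Modulo these identifications, which are by now routine in $\spinc$ hypersurface theory (cf. the references \cite{n1,n2,h1} cited in the introduction), the proof is a direct transcription of the $\spin$ argument.
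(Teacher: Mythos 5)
Your proposal is correct and follows essentially the same route as the paper: type I gives $\Vert V\Vert=f>0$, so Rademacher's Theorem~\ref{rada_thm}(3) yields the warped product with $f=k$ and $b=\frac{f'}{2f}$, then \eqref{q0} together with the $\spinc$ Gauss formula shows the slice spinor is parallel, and the converse is the same rescaled-parallel-spinor construction (the paper just carries out explicitly the parity bookkeeping, via the $\pm\i$-eigenspace of $\partial_t\cdot$ and the doubling $\phi\oplus\phi$ for $n$ even, that you leave as routine). Your worry about $\Omega$ having no $\d t$-component is not actually needed: the induced $\spinc$ structure and connection on the slice are taken as given, and the theorem asserts nothing about the auxiliary bundle of $F$ beyond the existence of a parallel $\spinc$ spinor.
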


\begin{proof} The proof is analogous to the ordinary $\Spin$ case: For a type \rm{I} imaginary  Killing spinor $\psi$, $q_\psi=0$ and hence, $\Vert V\Vert =f=|\psi|^2$. Then by Lemma~\ref{gen}\eqref{gen_pos} $V$ has no zeros. By Theorem~\ref{rada_thm}, a Riemannian covering of $M$ is the warped product of a complete Riemannian manifold $(F^{n-1}, h)$ and $(\RR, \d t^2$), 
warped by a positive smooth function $k(t)$, i.e. $(F\times_k \RR, k^2(t)h+ \d t^2)$. The lift of $V$ to this covering is given by $k\frac{\partial}{\partial t}$. Then, $k(t)=\Vert V\Vert =f$.
Thus, using  \eqref{conf_Kill}, we get 
$$f^{'} \frac{\partial}{\partial t}= \nabla f = 2b V = 2b f\frac{\partial}{\partial t}.$$ 
Hence, $b = \frac{f^{'}}{2 f}$. Moreover, the manifold $F_t := F\times_f \{t\} = (F, f(t)h)$ can be viewed as a hypersurface of $M$ whose mean curvature  with respect to the unit normal vector field $\partial_t = \frac 1f V$ is given by $-\frac{f^{'}}{f}$ \cite[Example~4.2]{bgm}. Hence, $F := F_0$ carries an induced $\spin^c$ structure \cite{2ana}. 
Using \eqref{q0} and the $\spin^c$ Gauss formula \cite[Proposition~3.3]{2ana}, 
we calculate for $\phi= \psi\vert_{F}$
\begin{eqnarray*}
\nabla_X^{F}\phi = \left(\nabla_X\psi\right)\vert_{F}  +  \frac{f^{'}}{2f} X\cdot\partial_t\cdot\psi\vert_F = \i b X\cdot\psi\vert_F - 
\i b X\cdot\psi\vert_F =0,
\end{eqnarray*}
where $\nabla^F$ is the $\spinc$ connection on $F$. This gives a parallel spinor field $\phi$ on $F$. 

For the converse, let $\phi$ be a nonzero parallel spinor on $(F^{n-1}, h)$. By parallel transport of $\phi$ in $t$-direction we get $\phi(t,x)$. Firstly  assume that $n$ is odd, i.e., $n= 2m +1$. Then, we can assume that w.l.o.g. that $\phi$ is in one of the $S^{\pm}_F$ such that $\partial_t\cdot \phi= (-1)^m \i  \phi$ where $\phi$ is now seen as a spinor in $S_M$, cp. \cite[Lemma~4]{baum}.
Set $\psi(t,x)= \eta(t) \phi(t,x)$ with $\eta(t)=e^{-\int_0^t\frac{k'(s)}{2k(s)} \d s}$ and $b=(-1)^m \frac{k'}{2k}$. Then for $X\in \Gamma(TM)$ with $X\perp \partial_t$ we get 

\begin{eqnarray*}
\nabla_X\psi = \eta \nabla^{F_t}_X\phi -  \eta \frac{k^{'}}{2k} X\cdot\partial_t\cdot\phi = \i b X\cdot\psi.
\end{eqnarray*}
Moreover, $\nabla_{\partial_t}\psi = \eta' \phi  = -\i(-1)^m\eta' \partial_t\cdot \phi= \i(-1)^m\frac{k'}{2k}\eta \partial_t\cdot \phi=\i b\partial_t\cdot \psi $. Thus, $\psi$ is a Killing spinor to Killing function $b$. Moreover,  $\Vert V\Vert= |g(V,\partial_t)|=|i\<\partial_t\cdot \psi, \psi\>|= |\psi|^2=f $, thus, $\psi$ is of type I.
Similar we obtain the Killing spinor when $n$ is even: As in \cite[Lemma~4]{baum} $\tilde{\phi}=\phi\oplus \phi$ can be seen as a spinor in $S_M$ with $\partial_t\cdot \tilde{\phi}=(-1)^m \i \tilde{\phi}$ and $n=2m+2$. 

Set $b=(-1)^m \frac{k'}{2k}$. Then for $X\in \Gamma(TM)$ with $X\perp \partial_t$ we get 
$\nabla_X\psi =  -  \eta \frac{k^{'}}{2k} X\cdot\partial_t\cdot\phi = \i b X\cdot\psi$
and $\nabla_{\partial_t}\psi = \eta' \phi  = \i b \partial_t\cdot\psi$. Thus, $\psi$ is a Killing spinor to Killing function $b$. Moreover,  $\Vert V\Vert= |g(V,\partial_t)|=|\i\<\partial_t\cdot \psi, \psi\>|= |\psi|^2=f $, thus, $\psi$ is of type I.
\end{proof}

\begin{cor}
Let $(M^n, g)$ be a complete connected Riemannian $\spinc$ manifold admitting an imaginary Killing spinor of Killing number $i\mu$, $\mu\in \RR$. If $\psi$ is of type {\rm I}, a Riemannian covering of $M$ is isometric to the warped product 
$(F^{n-1}  \times \RR, e^{4\mu t} h \oplus dt^2 ),$ where $(F^{n-1}, h)$ is a complete $\spin^c$ manifold with a non-zero parallel spinor. 
\end{cor}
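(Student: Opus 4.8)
The plan is to obtain this statement as the direct specialization of Theorem~\ref{type_I} to a constant Killing function $b\equiv\mu$. First I would apply Theorem~\ref{type_I}: since $\psi$ is of type {\rm I}, a Riemannian covering of $M$ is isometric to a warped product $F\times_k\RR=(F^{n-1}\times\RR,\,k(t)^2 h\oplus\d t^2)$, where $(F^{n-1},h)$ is a complete Riemannian $\spinc$ manifold carrying a non-zero parallel spinor, $k$ is a positive smooth function of $t$, and $f(t,x)=k(t)$ with $b=\frac{k'}{2k}$ (recall $f=k$).

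Since in the present situation $b$ is the constant $\mu$, the identity $b=\frac{k'}{2k}$ becomes the linear ODE $k'(t)=2\mu\,k(t)$, whose positive solutions are exactly $k(t)=c\,e^{2\mu t}$ for some constant $c>0$. Substituting this into the warped metric gives $k(t)^2 h\oplus\d t^2=c^2e^{4\mu t}h\oplus\d t^2=e^{4\mu t}(c^2h)\oplus\d t^2$.

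It then remains to absorb the positive constant $c^2$ into the fiber metric. Replacing $h$ by $c^2h$ keeps $(F^{n-1},c^2h)$ complete, a constant homothety leaves the Levi-Civita connection unchanged, and therefore, under the canonical identification of the spinor bundles of $(F,h)$ and $(F,c^2h)$, it preserves the parallel spinor; the $\spinc$ structure is likewise unaffected. Renaming $c^2h$ back to $h$, we conclude that a Riemannian covering of $M$ is isometric to $(F^{n-1}\times\RR,\,e^{4\mu t}h\oplus\d t^2)$ with $(F^{n-1},h)$ a complete $\spinc$ manifold carrying a non-zero parallel spinor. There is no real obstacle here; the only point deserving a word of justification is the rescaling step, namely that a constant homothety of $h$ destroys neither completeness nor the parallel spinor, which is routine.
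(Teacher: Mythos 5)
Your proposal is correct and matches the paper's own argument: apply Theorem~\ref{type_I}, use $b=\mu$ constant to solve $f'=2\mu f$ as $f=ce^{2\mu t}$, and absorb the constant $c$ by rescaling $h$. The extra remark that a constant homothety preserves completeness and the parallel spinor is a fine (if routine) addition beyond what the paper states.
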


\begin{proof} By Theorem~\ref{type_I} it only remains to determine $f$. As above we have, $f'=2\mu f$. Thus, $f=ae^{2\mu t}$ for a positive constant $a$. By rescaling the metric $h$, we can assume that $a =1$.  
\end{proof}

\subsection{Imaginary Generalized Killing spinor of type II}\label{type_II_sec}

Next we study type \rm{II} generalized  imaginary Killing spinors to the Killing function $\i b$.  We will distinguish two cases: 

\subsubsection{\bf $b$ is constant.}

Then it turns out that $M$ is already $\Spin$:

\begin{prop}\label{type_II_con}
Let $(M^n, g)$ be a complete connected Riemannian $\spinc$ manifold with an imaginary Killing spinor $\psi$ of Killing number $\i\mu$, $\mu\in \RR\setminus \{0\}$. If $\psi$ is of type {\rm{II}}, $(M^n, g)$ is 
isometric to the hyperbolic space $\HH^n(-4\mu^2)$ endowed with its trivial $\spinc$ structure, i.e., its unique $\spin$ structure.
\end{prop}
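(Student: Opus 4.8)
The plan is to follow the classical $\spin$ argument of Baum adapted to the $\spinc$ setting, with the extra input that a type II imaginary Killing spinor forces the auxiliary line bundle to be trivial and flat. First I would exploit the integrability data already collected: from \eqref{conf_Kill} the vector field $V$ is a non-isometric closed conformal Killing field with $\nabla_X V = 2\mu f X$, and since $\psi$ is of type II the function $q_\psi = f^2 - \|V\|^2$ is a positive constant, so $f$ is bounded below by $\sqrt{q_\psi}>0$ and $V$ is nowhere zero. Hence $M$ falls into case (3) of Theorem~\ref{rada_thm}: a Riemannian covering is a warped product $F\times_h\RR$ with the lift of $V$ equal to $h\,\partial_t$. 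Combining $f' = 2\mu f$ (as in the Corollary after Theorem~\ref{type_I}) with $\|V\|^2 = f^2 - q_\psi$ pins down $f$ as an explicit function of $t$ and then forces the warping function and the metric on $F$; the constant sectional curvature $-4\mu^2$ should drop out of the Ricci identity together with Lemma~\ref{gen}\eqref{ric_kill} specialized to $K = \i\mu$, exactly as in \cite{baum}.

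Second, the genuinely $\spinc$ part of the statement is that the auxiliary bundle is trivial with flat connection, i.e. the $\spinc$ structure is the spin structure. Here I would use Lemma~\ref{gen}\eqref{ric_kill} with $K=\i\mu$ constant: it reads $\tfrac12(\Ric(X) - \i\,X\lrcorner\Omega)\cdot\psi = -2(n-1)\mu^2 X\cdot\psi$, while $M$ being Einstein with $\Ric = -4(n-1)\mu^2 g$ (which follows from the type I/II dichotomy analysis or directly from \eqref{SL} and Lemma~\ref{gen}\eqref{Lich_compl}) gives $\Ric(X)\cdot\psi = -4(n-1)\mu^2 X\cdot\psi$. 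Subtracting yields $(X\lrcorner\Omega)\cdot\psi = 0$ for all $X$, and since $\psi$ is nowhere zero (Lemma~\ref{gen}\eqref{gen_pos}) and Clifford multiplication by a $1$-form annihilating a nonzero spinor must vanish, we get $X\lrcorner\Omega = 0$ for all $X$, hence $\Omega \equiv 0$. Thus $L$ carries a flat connection; on the hyperbolic space $\HH^n$, which is simply connected, a flat line bundle is trivial with trivial connection, so by \cite[Lemma~2.1]{moro} the $\spinc$ bundle coincides with the spin bundle and $\psi$ becomes an ordinary imaginary Killing spinor with constant Killing number $\i\mu$.

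Third, I would invoke Baum's classification in the $\spin$ case \cite{baum2, baum, baum1}: a complete connected $\spin$ manifold carrying a type II imaginary Killing spinor of Killing number $\i\mu$ is isometric to $\HH^n(-4\mu^2)$. Since we have reduced to that situation on a covering, I would finish by checking completeness and simple-connectedness propagate correctly: $\HH^n$ is simply connected, so the Riemannian covering in Theorem~\ref{rada_thm} is in fact a diffeomorphism, and $M$ itself is $\HH^n(-4\mu^2)$ with its unique spin structure.

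The main obstacle I expect is the step showing $\Omega \equiv 0$ cleanly — one must be careful that the vanishing of $(X\lrcorner\Omega)\cdot\psi$ really implies $X\lrcorner\Omega = 0$ (this uses that a nonzero $1$-form acts injectively by Clifford multiplication, hence a $1$-form killing a nonzero spinor is zero), and one must make sure the Einstein condition $\Ric = -4(n-1)\mu^2 g$ is established independently of any $\spin$-only input. After that the geometry is forced and the rest is a direct appeal to Theorem~\ref{rada_thm} and Baum's theorem; the warped-product bookkeeping to identify the metric as the hyperbolic one is routine and can be cited from \cite{baum}.
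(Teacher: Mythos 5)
There is a genuine gap, and it sits at the very first step: your claim that type II ($q_\psi>0$) forces $V$ to be nowhere zero is backwards. The identity $q_\psi=f^2-\Vert V\Vert^2>0$ only gives $f>\Vert V\Vert\geq 0$, which is perfectly compatible with zeros of $V$; it is the type I condition $\Vert V\Vert=f>0$ that excludes zeros. Worse, the case you then invoke --- case (3) of Theorem~\ref{rada_thm}, a warped product $F\times_k\R$ with the lift of $V$ equal to $k\,\partial_t$ --- is precisely the case that is \emph{impossible} under type II: there one has $f'=2\mu k$ (not $f'=2\mu f$; that relation belongs to the type I corollary you cite) and, by \eqref{conf_Kill}, $f''=4\mu^2 f$, so $f=Ae^{2\mu t}+Be^{-2\mu t}$; positivity of $f$ and of the warping function $k$ then forces $B=0$ and $k=f$, i.e.\ $\Vert V\Vert=f$ and $q_\psi=0$, contradicting type II. So your route leads to a contradiction rather than to the hyperbolic metric, and no amount of ``warped-product bookkeeping'' will produce $\HH^n(-4\mu^2)$ from it. The missing idea is the opposite conclusion: under type II, $f=|\psi|^2$ must have a critical point, and then \eqref{conf_Kill} gives $\mathrm{Hess}\, f=4\mu^2 f\, g$ on the complete manifold $M$ itself, so Kanai's theorem (\cite[Theorem~C]{kanai}) identifies $(M,g)$ with $\HH^n(-4\mu^2)$ directly, with no covering to descend from. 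This purely Riemannian step is what the paper uses and what your outline lacks.

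This also resolves, rather than sidesteps, the circularity you flag yourself: the Einstein condition $\Ric=-4(n-1)\mu^2 g$ does \emph{not} follow ``directly from \eqref{SL} and Lemma~\ref{gen}\eqref{Lich_compl}'', because in the $\Spinc$ setting those identities couple $S$, respectively $\Ric$, with the unknown curvature term $\Omega$, and one cannot separate them a priori; nor can you appeal to Baum's spin classification before knowing $\Omega=0$. In the paper the Einstein condition is a consequence of the constant-curvature identification obtained from Kanai. Once that is in place, your second step coincides with the paper's and is correct: $\HH^n$ is contractible, hence carries a unique $\Spinc$ structure; Lemma~\ref{gen}\eqref{ric_kill} with $K=\i\mu$ together with $\Ric=-4(n-1)\mu^2 g$ yields $(X\lrcorner\Omega)\cdot\psi=0$ for all $X$, and since a non-zero $1$-form acts injectively on a non-vanishing spinor, $\Omega\equiv 0$, so the auxiliary connection is flat and the structure is the spin structure. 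With the order of the arguments corrected, the final appeal to Baum's description of imaginary Killing spinors on $\HH^n(-4\mu^2)$ is unnecessary for the statement itself.
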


\begin{proof} Let $\psi$ be of type \rm{II}, i.e., $q_\psi >0$. First we assume that $f=|\psi|^2$ has no critical points, then, by \eqref{conf_Kill} the number of zeros of $V$ is $0$. From Theorem~\ref{rada_thm} we obtain that a Riemannian covering of $M$ is isometric to the warped product $F \times_{k}\RR$ where $k(t)$ is a function on $t$ alone, $F$ a complete Riemannian manifold and the lift of $V$ is $k\frac{\partial}{\partial t}$. 
Then again with \eqref{conf_Kill} we obtain that $f$ also just depends on $t$ and $f'\frac{\partial}{\partial t}=\nabla f= 2\mu V$. Thus, $f'= 2\mu k$ and $f''\frac{\partial}{\partial t}=2\mu \nabla_{\partial_t} V=4\mu^2f \frac{\partial}{\partial t}$.  Hence, $f=Ae^{2\mu t} +B e^{-2\mu t}$ for constants $A,B$. Since $V$ and, hence, $f'$ has no zeros,   $f'=2\mu k$ and since $k$ and $f$ are everywhere positive, we obtain $f=Ae^{2\mu t}$, $A>0$, and $k=f$. Hence, $k= \Vert V\Vert=f$ and $q_\psi =0$ which gives a contradiction. 

Hence, $f$ has  critical points. Using \eqref{conf_Kill} we obtain for $X, Y \in \Gamma(TM)$
 \[\mathrm{Hess} f (X, Y) = g(\nabla_X \nabla f, Y)= 2\mu g(\nabla_X V,Y)= 4 \mu^2 g(X, Y) f.\]

 By \cite[Theorem~C]{kanai}, $M$ is isometric to the simply connected complete Riemannian manifold $(\HH^{n}, (2|\mu|)^{-1}g_{\HH})$ of constant curvature $-4\mu^2$. Since $\mathbb H^n$ is contractible,  $\mathbb H^n$ admits only one $\Spinc$ structure -- the canonical one coming from the $\Spin$ structure. 

By Lemma~\ref{gen}\eqref{ric_kill} we obtain 
 $$\mathrm{\Ric}(X)\cdot \psi - i(X\lrcorner \Omega)\cdot\psi = -4(n-1)\mu^2 X\cdot\psi$$
for all $X\in \Gamma(TM)$. Since the Ricci tensor of $M$ is given by ${\Ric}= -4(n-1)\mu^2$, we obtain $(X\lrcorner \Omega)\cdot\psi =0$ for all $X \in \Gamma(TM)$ and, hence, $\i\Omega=0$. 

Thus, the $\spinc$ structure is identified with the unique  $\spin$ structure on $\HH^n$. Here we recall that, 
on $\HH^n(-4\mu^2)$ endowed with its unique $\spin$ structure, imaginary Killing spinors of Killing number $\i\mu$ and $-\i\mu$ form an orthogonal basis of $\HH^n$ with respect to the Hermitian scalar product defined on $\Sigma M$ \cite{baum}.
\end{proof}

\subsubsection{\bf $b$ is not constant.}
On $\spin$ manifolds, H.-B. Rademacher proved that there are no imaginary generalized Killing spinors of type \rm{II} where $b$ is non-constant. For dimension $n\geq 3$, this will be still true for $\Spinc$ manifolds. In contrast, in dimension $2$ such spinors exist. In order to carry out the case of generalized $\Spinc$ Killing spinors, we need the following auxiliary lemma.

\begin{lemma}\label{aux_lem} Let $\psi$ be a generalized Killing spinor to the Killing function $\i b$, $b\in C^\infty (M, \RR)$. Then, in all points of $M$  where $\nabla b\neq 0$ 
\[ \< X\cdot \psi, \psi \>=0\quad \text{\ for\ all\ } X\perp \nabla b.\]
\end{lemma}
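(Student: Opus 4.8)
The plan is to extract the pointwise identity from the differential relations already derived for the forms $\omega_p$ together with the structural equations \eqref{conf_Kill}. Fix a point $x\in M$ with $\nabla b(x)\neq 0$. Since $\nabla f = 2bV$ by \eqref{conf_Kill}, and $b$ is differentiable, the $1$-form $\d b$ and the $1$-form $\xi := \i\omega_1 = g(V,\cdot)$ are tied together; I would first observe that $q_\psi = f^2 - \|V\|^2$ being constant gives, after differentiating, $f\,\d f = g(\nabla_X V, V)$-type relations forcing $\d f = 2b\,\xi$ (already in \eqref{conf_Kill}) and hence in particular $V$ and the gradient of $f$ are parallel vector fields at $x$. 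The point where $\nabla b\neq 0$ is exactly where $V$ cannot be proportional to $\nabla f$ in a trivial way, so the extra information must come from the second-order content.

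The key computational input is Lemma~\ref{p-form-first} specialized to $a=0$: there it reads $\d\omega_{2k} = 2\i b\,\omega_{2k+1}$ and $\d\omega_{2k+1} = 0$ (since $a\equiv 0$), and taking the differential of the first relation yields $\d b\wedge \omega_{2k+1} = 0$ for every $k\geq 0$, in particular $\d b\wedge\omega_1 = 0$, i.e. $\d b$ and $\xi$ are proportional at every point where both are nonzero. So I would first conclude $\xi = \lambda\,\d b$ locally near $x$ for a function $\lambda$, equivalently $V$ is (a multiple of) $\nabla b$ at $x$. Given this, the claim $\langle X\cdot\psi,\psi\rangle = 0$ for $X\perp \nabla b$ becomes exactly the statement $\omega_1(X) = 0$ for $X\perp V$, which is automatic once we know $\omega_1 = \i^{-1}g(V,\cdot)$ is a multiple of $g(\nabla b,\cdot)$: indeed $X\perp\nabla b$ then forces $X\perp V$, hence $\langle X\cdot\psi,\psi\rangle = -\i\,g(V,X) = 0$.

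Thus the real content reduces to establishing that $V \parallel \nabla b$ wherever $\nabla b\neq 0$, and the main obstacle is making sure this proportionality is genuine and not vacuous — i.e. handling the set where $b$ vanishes but $\nabla b$ does not, and the set where $f$ has a zero (ruled out by Lemma~\ref{gen}\eqref{gen_pos}, so $f>0$ everywhere). From $\d b\wedge\omega_1 = 0$ one gets proportionality only on the open set where $\omega_1\neq 0$; to cover points where $V$ vanishes I would argue that if $V(x)=0$ then $\nabla f(x)=2b(x)V(x)=0$, and then differentiate $\nabla_X V = 2bfX$ to get $\mathrm{Hess}\,f(x)=4b(x)^2f(x)\,g$; combined with $\d f = 2b\,\xi$ one controls $\d b$ near such points. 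I expect this degenerate-locus bookkeeping to be the only delicate part; on the dense open set where $V\neq 0$ the identity $\d b\wedge\omega_1 = 0$ from Lemma~\ref{p-form-first} does all the work immediately, and the conclusion $\langle X\cdot\psi,\psi\rangle = 0$ for $X\perp\nabla b$ follows by continuity.
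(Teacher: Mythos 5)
Your key identity is exactly the paper's: specializing \eqref{eq1} (equivalently Lemma~\ref{p-form-first}) to $a\equiv 0$ gives $\d b\wedge\omega_1=0$, and from there the claim is essentially immediate. However, you then take an unnecessary detour: you pass to the statement ``$V$ is proportional to $\nabla b$ where $\omega_1\neq 0$'' and treat the locus $\{V=0\}$ as a delicate case to be handled by a Hessian computation, density of $\{V\neq 0\}$ and continuity. As sketched, that part is not a proof (the density of $\{V\neq 0\}$ is not justified, and ``one controls $\d b$ near such points'' is not an argument) — but it is also not needed: at a point where $V=0$ one has $\< X\cdot\psi,\psi\>=-\i\,g(V,X)=0$ for \emph{every} $X$, so the conclusion is trivial there. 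The paper avoids any case distinction by simply evaluating the $2$-form on the pair $(\nabla b, X)$: for $X\perp\nabla b$,
\[
0=(\d b\wedge\omega_1)(\nabla b,X)=|\nabla b|^2\,\omega_1(X)-\d b(X)\,\omega_1(\nabla b)=|\nabla b|^2\,\< X\cdot\psi,\psi\>,
\]
which yields the statement at once, with no reference to $V$, to \eqref{conf_Kill}, or to $q_\psi$. So your proposal is correct in substance and rests on the same identity as the paper, but your opening paragraph (about $q_\psi$ and $\nabla f=2bV$) is irrelevant to the lemma, and the ``degenerate-locus bookkeeping'' should be replaced either by the trivial observation above or by the paper's direct evaluation.
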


\begin{proof}
From \eqref{eq1}, we have that $\d b \wedge \omega_1 = 0$. Let $X\perp \nabla b$. Then
$$0 = (\d b \wedge \omega_1) (\nabla b, X) = \vert \nabla b \vert^2 \omega_1 (X) =\vert \nabla b \vert^2  \<X \cdot\psi, \psi\>.$$
Thus, $\<X \cdot\psi, \psi\> =0$.
\end{proof}

\begin{prop}\label{no_gen_II_1}
 Let $(M^n, g)$ be a complete connected Riemannian $\spinc$ manifold of dimension $n \geq 3$. Then, every 
 imaginary generalized Killing spinor of type \rm{II} is already an imaginary Killing spinor.
\end{prop}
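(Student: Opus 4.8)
The plan is to argue by contradiction: assume $\psi$ is a type~\rm{II} imaginary generalized Killing spinor with non-constant Killing function $\i b$, so $q_\psi = f^2 - \Vert V\Vert^2 > 0$. The starting point is the rich structure already recorded in \eqref{conf_Kill}: $V = \i\omega_1^\flat$ is a non-isometric, closed conformal Killing field with $\nabla f = 2bV$ and $\nabla_X V = 2bf X$. First I would pin down the open set $U := \{x \in M \mid \nabla b(x) \neq 0\}$, which is non-empty since $b$ is non-constant, and work there. On $U$, Lemma~\ref{aux_lem} tells us that $\<X\cdot\psi,\psi\> = 0$ for all $X \perp \nabla b$; equivalently $\omega_1$ (and hence $V$) is pointwise proportional to $\nabla b$ on $U$. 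But $\nabla f = 2bV$, so $\nabla f$, $V$ and $\nabla b$ are all collinear on $U$ — in particular $\nabla b$ and $\nabla f$ are parallel wherever $b \neq 0$ as well.

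Next I would extract a differential relation between $b$ and $f$. From $\nabla f = 2bV$ we get $\Vert \nabla f\Vert^2 = 4b^2\Vert V\Vert^2 = 4b^2(f^2 - q_\psi)$. On the other hand, differentiating $\nabla f = 2bV$ and using $\nabla_X V = 2bfX$ gives $\mathrm{Hess}\, f(X,Y) = 2\,\d b(X)\,g(V,Y) + 4b^2 f\, g(X,Y)$. Tracing, $\Delta f = 2\<\d b, V\> + 4nb^2 f$; and since $\<\d b, V\>$ controls how $V$ moves $b$, combined with the collinearity of $\nabla b$ and $V$ on $U$ this should force a rigid ODE-type relation. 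The cleanest route is probably to restrict everything to an integral curve / geodesic in the direction $\partial := V/\Vert V\Vert$ (well-defined on $U$ since $V$ has no zeros by Lemma~\ref{gen}\eqref{gen_pos}) and show $f$ satisfies the same second-order linear ODE $f'' = 4b^2 f$-type equation as in Proposition~\ref{type_II_con}, but now with $b$ a function — then use that $q_\psi > 0$ is a genuine constant to derive $b' \equiv 0$ on $U$, contradicting the definition of $U$.

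The key structural input that makes the $\Spinc$ case work like the $\Spin$ case of Rademacher is the dimension hypothesis $n \geq 3$: with $n \geq 3$, the orthogonal complement of $\nabla b$ has dimension $\geq 2$, so Lemma~\ref{aux_lem} genuinely constrains $V$ to a line, whereas in dimension $2$ the complement is only a line and one loses this rigidity — which is exactly why the $n = 2$ exception appears in Theorem~\ref{gen_II_2}. I would therefore emphasize where $n \geq 3$ enters. The main obstacle I anticipate is purely algebraic bookkeeping: making the passage from "the three gradients are collinear on $U$" to "$b$ is locally constant" rigorous, because one must rule out the degenerate possibilities ($V = 0$, handled by Lemma~\ref{gen}\eqref{gen_pos}; $b = 0$ on part of $U$, handled since $\nabla b \neq 0$ there forces $b \not\equiv 0$ nearby) and one must propagate the conclusion from $U$ to a dense set and then use continuity/connectedness of $M$ to conclude $b$ is globally constant, reaching the contradiction with type~\rm{II} non-constant. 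Once $b$ is constant the spinor is an ordinary imaginary Killing spinor, which is the assertion.
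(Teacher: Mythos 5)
Your overall strategy (contradiction, work on $U=\{\nabla b\neq 0\}$, invoke Lemma~\ref{aux_lem} to get $V\parallel\nabla b$) starts out like the paper's proof, but the core of your argument — deriving a contradiction from the conformal-Killing data \eqref{conf_Kill}, the collinearity of $\nabla f$, $V$, $\nabla b$, and the constancy of $q_\psi$ — cannot work, and this is a genuine gap rather than missing bookkeeping. First, the constancy of $q_\psi$ carries no extra information: from \eqref{conf_Kill} one computes $X(f^2-\Vert V\Vert^2)=4bf\,g(V,X)-2g(\nabla_XV,V)=4bf\,g(V,X)-4bf\,g(X,V)=0$, so it is automatic and cannot be played off against an ODE for $f$. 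Second, all the ingredients you list are dimension-independent and are actually satisfied by the two-dimensional type \rm{II} example of Theorem~\ref{gen_II_2}, where $b=-c'(x)$ is non-constant: there $\<\i\,\wi\partial_y\cdot(\phi\otimes\sigma),\phi\otimes\sigma\>=0$, i.e. $V$ is collinear with $\nabla b$, exactly the conclusion of Lemma~\ref{aux_lem}. In particular your explanation of where $n\geq 3$ enters is incorrect: even when the orthogonal complement of $\nabla b$ is a single line, the lemma still pins $V$ to the $\nabla b$-direction, so no rigidity is lost in dimension $2$ at that stage. Third, at the level of the warped-product ODEs coming from Theorem~\ref{rada_thm} ($f'=2bk$, $k'=2bf$ with $f^2-k^2=q_\psi$ constant), given \emph{any} positive $f(t)$, $k(t)$ with $f^2-k^2=q_\psi$ one may simply set $b:=f'/(2k)$ and both equations hold identically; so no contradiction with $q_\psi>0$ and $b$ non-constant can emerge from this data alone, in any dimension.

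The dimension hypothesis and the $\Spinc$ curvature must enter through the spinorial curvature identities, which your outline never uses. The paper's proof Clifford-multiplies the Lichnerowicz-type identity of Lemma~\ref{gen}\eqref{Lich_compl} by $e_i$ (with $e_i\perp\d b$) and compares its real part with the real and imaginary parts of the Ricci identity Lemma~\ref{gen}\eqref{ric_kill}; this produces \eqref{1_Omega} and \eqref{omega-db}, two relations between $\<\d b\cdot e_i\cdot\psi,\psi\>$ and $\Omega\bigl(e_i,\tfrac{\d b}{|\d b|}\bigr)$ whose coefficients differ by a factor $(n-1)$ versus $1$. For $n\geq 3$ this forces $\<\d b\cdot e_i\cdot\psi,\psi\>=0$ and $\Omega\bigl(e_i,\tfrac{\d b}{|\d b|}\bigr)=0$ (this is precisely where $n\geq3$ is used, and where $n=2$ escapes — note the $2$-dimensional example has $\Omega\neq 0$), and then the Ricci identity in the direction $\tfrac{\d b}{|\d b|}$ shows $\tfrac{\d b}{|\d b|}\cdot\psi$ is proportional to $\psi$, giving $\Vert V\Vert\geq\bigl|\<\tfrac{\d b}{|\d b|}\cdot\psi,\psi\>\bigr|=|\psi|^2=f$, hence $q_\psi\leq 0$, the desired contradiction with type \rm{II}. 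To repair your proof you would have to incorporate these identities (and hence the auxiliary curvature $\Omega$); the conformal-Killing-field analysis by itself cannot distinguish $n\geq3$ from $n=2$ and therefore cannot prove the proposition.
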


\begin{proof} We prove the claim by contradiction and assume that there is a Killing spinor $\psi$ to a non-constant Killing function $\i b$, $b\in C^\infty(M, \RR)$. Then, there is a point $x\in M$ where $\nabla b$ is non-zero. In the following, we will always identify $\nabla b$ and $\d b$ using the metric $g$. Then, $\d b$ is non-zero in a neighbourhood $U$ of $x$, and one can find a local orthonormal frame $(e_1,\ldots, e_{n-1}, \frac{\d b}{|\d b|})$ of $TU$. Then, by Lemma~\ref{aux_lem} $\< e_i\cdot \psi, \psi \>=0$ for all $1\leq i\leq n-1$ which will used in following without any further comment. In particular, this implies that the conformal Killing field $V$ (cf. \eqref{def_V}) is parallel to $\text{d}b$ and $\Vert V\Vert=g\left(V,\frac{\d b}{|\d b|}\right)=-\i \left\< \frac{\d b}{|\d b|}\cdot \psi, \psi\right\>$. By Theorem~\ref{rada_thm} $V$ has at most two zeros. Hence, there is an $y\in U$ where $\< \d b\cdot \psi, \psi\>\neq 0$. The following calculations will be carried out at this point $y$.

Take now $e_i$ Clifford multiplied with the Lichnerowicz identity in Lemma~\ref{gen}\eqref{Lich_compl} and its scalar product with $\psi$:
\begin{align*}
 -n(n-1)b^2 \< e_i\cdot \psi, \psi\>  - \i(n-1) \left\<e_i \cdot \d b\cdot \psi, \psi\right\> = \frac{S}{4} \< e_i\cdot \psi, \psi\> + \frac{\i}{2} \<e_i \cdot \Omega\cdot \psi,\psi\>.
\end{align*}

Taking the real part gives
\begin{align}\label{1_Omega}
\i (n-1) \left\<e_i \cdot \d b\cdot \psi, \psi\right\> = \frac{\i}{2} \Omega\left(e_i, \frac{\d b}{|\d b|}\right) \left\<\frac{\d b}{|\d b|} \cdot \psi,\psi\right\>.
\end{align}

On the other hand taking the scalar product of the Ricci identity in Lemma~\ref{gen}\eqref{ric_kill} for $X=e_i$ with $\psi$  and using $\<e_j\cdot \psi, \psi\>=0$ gives
 \begin{align*}
  \frac{1}{2}& \Ric\left(e_i,\frac{\d b}{|\d b|}\right) \left\<\frac{\d b}{|\d b|}\cdot \psi, \psi\right\> - \frac{\i}{2} \Omega\left(e_i, \frac{\d b}{|\d b|}\right) \left\< \frac{\d b}{|\d b|}\cdot \psi, \psi\right\>
= \i \left\< \d b\cdot e_i\cdot \psi, \psi\right\>.
\end{align*}
 
From the imaginary part of this equation  we obtain
 \begin{align}\label{eq_Ric_kill}
  \Ric\left(e_i,\frac{\d b}{|\d b|}\right) \left\<\frac{\d b}{|\d b|}\cdot \psi, \psi\right\> =0 \text{\ and,\ hence,\ }  \Ric\left(e_i,\frac{\d b}{|\d b|}\right) =0,
\end{align}

and the real part gives 
 \begin{eqnarray}\label{omega-db}
- \frac{\i}{2} \Omega\left(e_i, \frac{\d b}{|\d b|}\right) \left\< \frac{\d b}{|\d b|}\cdot \psi, \psi\right\>= \i \left\< \d b\cdot e_i\cdot \psi, \psi\right\>.
\end{eqnarray}

Since $n\geq 3$, \eqref{1_Omega} and \eqref{omega-db} imply 
 \begin{align}\label{ric_kill_2}
\left\< \d b\cdot e_i\cdot \psi, \psi\right\> =0 \quad\text{and} \quad \Omega\left(e_i, \frac{\d b}{|\d b|}\right) =0.
\end{align}

The Ricci identity in Lemma~\ref{gen}\eqref{ric_kill} for $X=\frac{\d b}{|\d b|}$ together with \eqref{ric_kill_2} and \eqref{eq_Ric_kill} gives 

 \begin{align*}
  \frac{1}{2} \Ric\left(\frac{\d b}{|\d b|}, \frac{\d b}{|\d b|}\right) \frac{\d b}{|\d b|} \cdot \psi= -\i |\d b| \psi + n \i |\d b|\psi -2(n-1)b^2 \frac{\d b}{|\d b|} \cdot \psi.
\end{align*}

In particular, $\frac{\d b}{|\d b|}\cdot \psi$ is parallel to $\psi$ and 
$\Vert V \Vert\geq |g(V, \frac{\d b}{|\d b|})|=\left|\< \frac{\d b}{|\d b|}\cdot \psi, \psi\>\right|= |\psi|^2$. Hence, $q_\psi\leq 0$ which gives the contradiction.
\end{proof}
We still have to carry out the $2$-dimensional case.

\begin{thm}\label{gen_II_2}
In dimension $2$, there exists imaginary generalized Killing $\Spinc$ spinors of type \rm{II} with non-constant Killing function.
\end{thm}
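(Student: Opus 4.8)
The strategy is to produce an explicit example: we want a surface $(M^2,g)$, a $\Spinc$ structure with a suitably chosen auxiliary connection, and a spinor $\psi$ satisfying $\nabla_X\psi=\i b\,X\cdot\psi$ with $b$ non-constant and with $q_\psi>0$. Because of Proposition~\ref{no_gen_II_1}, dimension $2$ is exactly the dimension where the rigidity argument breaks down (there the frame $(e_1,\ldots,e_{n-1},\d b/|\d b|)$ degenerates to a single vector $e_1$, and the steps relying on $n\geq 3$ collapse), so the construction must genuinely use the freedom in the auxiliary line bundle $L$ and its connection form $A$. The natural ansatz is to take $M=\R^2$ (or an open subset, or a warped product $F\times_k\R$ with $F$ one-dimensional) with metric $g=k(t)^2\,\d x^2+\d t^2$, and to look for $\psi$ adapted to the $\partial_t$-direction.

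\textbf{Key steps.} First I would write down the $\Spinc$ covariant derivative \eqref{nnnabla} in dimension $2$ for a metric of the above warped form in the orthonormal frame $e_1=k^{-1}\partial_x$, $e_2=\partial_t$; the Levi-Civita part contributes a single connection coefficient involving $k'/k$, and the auxiliary part contributes $\tfrac{\i}{2}A(s_*(X))$, a free function $\alpha(x,t)$ once a local section $s$ is fixed. Second, I would impose $\nabla_{e_1}\psi=\i b\,e_1\cdot\psi$ and $\nabla_{e_2}\psi=\i b\,e_2\cdot\psi$ as a linear first-order ODE system on the $\C^2$-valued function $\psi$, treating $b(t)$ as the non-constant Killing function and solving for $\alpha$ and for $\psi$ simultaneously; concretely one can try $\psi=\eta(t)\,\phi$ with $\phi$ a fixed algebraic spinor that is an eigenvector of $e_2\cdot$ (so that $e_2\cdot\phi=\pm\i\phi$), which forces $\eta'=\pm b\,\eta$ and then reduces the $e_1$-equation to an algebraic relation determining $\alpha$ in terms of $b$, $k$, and $k'$. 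Third, I would compute $f=|\psi|^2$, the conformal Killing field $V$ from \eqref{def_V}, and $q_\psi=f^2-\|V\|^2$, and verify that for a suitable choice of $k$ (e.g. one for which $f$ stays bounded away from $0$ while $V$ does not exhaust $f$) one indeed gets $q_\psi>0$ rather than the type-I value $0$; the point is that in two dimensions $V$ need not be a multiple of $\partial_t$ of the full length $f$, precisely because of the extra phase $\alpha$. Finally I would exhibit a concrete closed-form choice of $k(t)$ and $b(t)$ making everything smooth, global, and complete, and with $b$ genuinely non-constant, thereby proving existence.

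\textbf{Main obstacle.} The delicate point is the last one: ensuring simultaneously that (i) $b$ is non-constant, (ii) $q_\psi>0$ strictly (this is what distinguishes the new phenomenon from Theorem~\ref{type_I}), and (iii) the data $(g,A,\psi)$ extend to a complete manifold with a globally defined $\Spinc$ structure. Completeness and global existence of $L$ with the prescribed curvature $\i\Omega=\i\,\d(\alpha\,\text{(coframe)})$ is a cohomological/analytic constraint — one must check that the curvature $2$-form one is forced into has the right periods (or simply work on $\R^2$, where every line bundle is trivial and this is automatic) while keeping the metric complete. I expect that choosing $M=\R^2$ or $M=\cercle^1\times\R$ with an explicit warping function reduces all three conditions to elementary inequalities on $k,k',b$, and that the genuine content of the theorem is simply displaying one such triple; the calculation that $q_\psi>0$ is the step where the two-dimensional special structure (a single tangent direction transverse to $\nabla b$, versus several in higher dimensions) must be used explicitly.
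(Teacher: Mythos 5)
There is a genuine gap, and it sits exactly at the point your plan identifies as the crux. Your concrete ansatz is $\psi=\eta(t)\,\phi$ with $\phi$ a fixed eigenvector of Clifford multiplication by the unit vector $e_2=\partial_t$, i.e. $e_2\cdot\phi=\pm\i\,\phi$. But then, directly from \eqref{def_V},
\begin{equation*}
g(V,e_2)=\i\,\< e_2\cdot\psi,\psi\>=\i\,(\pm\i)\,|\psi|^2=\mp f,
\end{equation*}
so $\Vert V\Vert\geq |g(V,e_2)|=f$ and hence $q_\psi=f^2-\Vert V\Vert^2\leq 0$. Since $q_\psi$ is a non-negative constant, this forces $q_\psi=0$: any spinor of this form is automatically of type I, no matter how you choose $k$, $b$, or the auxiliary connection $\alpha$ (the computation of $g(V,e_2)$ is purely algebraic and does not see $\alpha$ at all). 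This is precisely the ansatz used in the proof of Theorem~\ref{type_I} (there $\partial_t\cdot\phi=(-1)^m\i\,\phi$), and it is characterized by \eqref{q0}. So your step three, "verify that $q_\psi>0$ for a suitable choice of $k$", cannot succeed within the ansatz of step two; the expected mechanism ("the extra phase $\alpha$" making $V$ shorter than $f$) does not operate, because a global phase or real scaling $\eta(t)$ never changes the ratio $\Vert V\Vert/f$ for an eigenspinor of a unit vector.

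What is missing is the idea that makes the paper's construction work: the spinor must genuinely mix the two chirality components, so that it is \emph{not} an eigenvector of $X\cdot$ for any unit vector $X$. The paper takes, on $\R^2$ with a conformal metric whose factor depends only on $x$, the spinor $\phi=-\cosh(c(x))\,\widetilde\phi_+ +\i\sinh(c(x))\,\widetilde\phi_-$ twisted by a trivial line bundle with connection $1$-form $\i\widetilde\alpha$; the Killing equation with Killing function $-\i c'(x)$ then reduces to two algebraic equations that determine $\alpha$ and the conformal factor in terms of $c$, and the hyperbolic mixing gives $q_\psi=(\cosh^2 c-\sinh^2 c)^2=1>0$ identically, with $b=-c'$ non-constant for, e.g., $c(x)=1+\tfrac{1}{1+x^2}$. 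The remaining parts of your plan (exploiting the freedom in the auxiliary connection to satisfy the Killing equation, then checking $q_\psi$ and smoothness/completeness explicitly) are in the right spirit and match the paper's strategy, but without replacing the eigenvector ansatz by such a chirality-mixing one, the construction collapses back onto type I.
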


\begin{proof} The proof is inspired by the construction of real generalized Killing $\Spinc$ spinors in dimension $2$, cf. \cite[Theorem 2.5]{8}. We consider the two-dimensional Euclidean space $(\mathbb R^2, g_E=\d x^2+\d y^2)$. Then $\{ \partial_x, \partial_y\}$ forms an orthonormal frame. We endow $\mathbb R^2$ with a  conformal metric $\widetilde g$ on $\mathbb R^2$ by requiring the frame $\{\wi \partial_x:=a \partial_x, \wi\partial_y :=a\partial_y\}$ be orthonormal.  Let $\widetilde \nabla$ be the covariant derivative corresponding to $\widetilde g$. The function $a$  will be specified later but depends only on $x$. Then, $ [\wi  \partial_x,\wi   \partial_y] = a' \wi  \partial_y$.
We denote by $\widetilde \nabla$ the Levi-Civita connection on $(\mathbb R^2, \widetilde g)$. 
Using the Koszul formula, one can check that
$$ \wi\nabla_{\wi  \partial_x} \wi  \partial_x =  0\quad \text{and}\quad  \wi\nabla_{\wi  \partial_x} \wi  \partial_y = -a' \wi  \partial_y.$$ 
We denote by $\Sigma \mathbb R^2$ (resp. $\wi\Sigma \mathbb R^2$) the spinor bundle of $(\mathbb R^2, g_E)$ (resp. $(\mathbb R^2, \wi g)$). By a slight abuse of notation, we denote the Clifford multiplication of $(\mathbb R^2, g)$ and $(\mathbb R^2, \wi g)$ by the same symbol $``\cdot"$.
Now, we consider the linear isomorphism of the tangent spaces of $\mathbb{R}^2$ w.r.t. the metrics $g_E$ and $\tilde{g}$ defined  by $ \partial_x\mapsto \wi  \partial_x$ and $\partial_y\mapsto  \wi  \partial_y$. This map lifts to a fibrewise isometric isomorphism of the spinor bundles $\Sigma \mathbb{R}^2\to \wi \Sigma \mathbb{R}^2$, see  \cite{BG}. Using this identification, let $\tilde{\phi}_+$ denote the image  of a positive parallel spinor $\phi_+$ in $\Sigma \mathbb{R}^2$ with $|\phi_+|=1$. Note that $\{\wi\phi_+, \wi\partial_x\cdot \tilde{\phi}_+\}$ forms an orthonormal basis of $\tilde{\Sigma} \mathbb{R}^2$. Let $\phi_-:=\partial_x\cdot \phi_+$. Since $\i \partial_x \cdot \partial_y \cdot \phi_+ = \phi_+$, see \eqref{vol_form}, we have $\partial_y\cdot \phi_{\pm} = \i  \phi_{\mp}$. Using again the identification of the spinor bundles, we get  $\tilde{\phi}_-=\wi \partial_x \cdot \tilde  \phi_+$ and $\wi \partial_y\cdot \tilde{\phi}_{\pm} = \i \tilde{\phi}_{\mp}$. Together with \eqref{nnabla}, we then 
get
\begin{eqnarray*}
\widetilde \nabla_{\widetilde  \partial_x} \tilde{\phi}_\pm = \frac 12 \widetilde g(\widetilde \nabla_{\widetilde  \partial_x} \widetilde  \partial_x,\widetilde  \partial_y) \widetilde  \partial_x \cdot \widetilde  \partial_y \cdot \tilde{\phi}_\pm = 0,
\end{eqnarray*}
and 
\begin{eqnarray*}
 \widetilde \nabla_{\widetilde  \partial_y} \tilde{\phi}_\pm&=& \frac 12 \widetilde g(\widetilde \nabla_{\widetilde  \partial_y} \widetilde  \partial_x,\widetilde  \partial_x) \widetilde  \partial_x \cdot \widetilde  \partial_y\cdot \tilde{\phi}_\pm
= -\frac{a'}{2} \widetilde  \partial_x \cdot \widetilde  \partial_y \cdot \tilde{\phi}_\pm
= \pm \i \frac{a'}{2} \tilde{\phi}_{\pm}.
\end{eqnarray*}

For the Killing spinor on $(\mathbb R^2, \widetilde g)$ we make the following ansatz  
$$\phi = - \cosh (c(x)) \widetilde \phi_+ + \i\sinh (c(x)) \widetilde \phi_-$$
where $c(x, y) = c(x)$, a real function depending only on $x$, will again be specified later.
We calculate 
\begin{align*}\wi \nabla_{\widetilde  \partial_x} \phi =& -c'(x)\sinh (c(x)) \widetilde \phi_+ + \i c'(x) \cosh (c(x)) \widetilde \phi_- \\
  =& \i c' (x) ( \cosh (c(x)) \widetilde  \partial_x \cdot \widetilde \phi_+ + \i\sinh (c(x)) \widetilde  \partial_x \cdot\widetilde \phi_-)\\
 =& -\i c' (x) \widetilde  \partial_x \cdot \phi. 
\end{align*}

Moreover,
\begin{eqnarray*}
\widetilde \nabla_{\widetilde  \partial_y} \phi =  
  - \i \frac{a'}{2} \cosh (c(x)) \tilde{\phi}_+   +\frac{a'}{2} \sinh (c(x)) \tilde{\phi}_- .
\end{eqnarray*}

We now consider the trivial line bundle $L$ on $\mathbb R^2$ with connection form given by an imaginary $1$-form $\i \widetilde \alpha$ satisfying $\widetilde \alpha (\widetilde  \partial_x) = 0$ and $\widetilde \alpha (\widetilde  \partial_y) = \alpha$. Here $\alpha$ is a real function  depending only on $x$. We twist $\widetilde \Sigma \mathbb R^2$ with $L$
which yields a $\Spinc$ structure on $\mathbb R^2$. Let $\sigma$ be a non-zero constant section  of $L$ and consider $\phi \otimes \sigma$. W.l.o.g. let $|\sigma|=1$. On $\wi \Sigma \mathbb R^2 \otimes L$, we consider the twisted connection $\hat { \nabla} = \wi \nabla \otimes \mathrm{Id}  + \mathrm{Id} \otimes \nabla^L$, where $\nabla^L$ is the covariant derivative on $L$ given by $\nabla^L_. \sigma = \i \wi \alpha (.) \sigma$. Then
\begin{align*}\hat \nabla_{\widetilde  \partial_x} (\phi \otimes \sigma) =& -\i c' \widetilde  \partial_x \cdot (\phi \otimes \sigma)\\
\hat \nabla_{\widetilde  \partial_y} (\phi \otimes \sigma) =& - \i\left( \frac{a^{'}}{2} + \alpha\right) \cosh (c(x)) \widetilde \phi_+ \otimes \sigma + \left(\frac {a^{'}}{2} - \alpha\right)\sinh (c(x))  \widetilde \phi_- \otimes \sigma.
\end{align*}
In order to show that  $\phi\otimes \sigma$ is a Killing spinor, we want the last term to be equal to
\begin{eqnarray*}
-\i c' \widetilde  \partial_y \cdot (\phi\otimes \sigma) =    \i c'\sinh (c(x)) \widetilde \phi_+ \otimes \sigma   -c' \cosh (c(x)) \widetilde \phi_- \otimes \sigma.
\end{eqnarray*}
Thus, we should solve
\begin{eqnarray*}
\left\{
\begin{array}{l}
(\frac{a'}{2}  + \alpha) \cosh (c(x)) = -c'\sinh (c(x)),\\
(\frac{a'}{2}  - \alpha)\sinh (c(x)) = -c' \cosh (c(x)).
\end{array}
\right.
\end{eqnarray*}
Any smooth function $c(x)$ with no zeros  together with
\begin{align*} 
 \alpha(x)&= \frac{1}{2}c' \left(\coth (c(x))  - \tanh (c(x))\right),\\
 a'(x)&= -c' \left(\coth (c(x))  + \tanh (c(x))\right),
\end{align*}
such that $a'$ is bounded, gives a solution. E.g. take $c(x)=1+\frac{1}{1+x^2}$. Hence, such a $c$ determines a Killing spinor to the Killing function $-\i c'$.  Note that since $a'$ is required to be bounded, the conformal factor $a$ can be chosen such that it is everywhere positive as requested.

It remains to show that such spinors are of type \rm{II}, i.e., that $q_{\phi\otimes \sigma}$  is positive. By definition $q_{\phi\otimes \sigma} = f^2 - \Vert V \Vert^2_{\widetilde g}$. 
First, note that $f = \vert \phi \otimes \sigma \vert^2_{\widetilde g} = \cosh^2 (c(x)) + \sinh ^2(c(x))$. Moreover, we have $$\Vert V\Vert_{\tilde{g}}^2 = \widetilde g (V, \widetilde \partial_x)^2 +   \widetilde g (V, \widetilde \partial_y)^2 = \<\i \widetilde \partial_x\cdot(\phi \otimes \sigma), \phi \otimes \sigma\>^2 + \<\i \widetilde \partial_y\cdot (\phi \otimes \sigma), \phi\otimes \sigma\>^2.$$
Together with
\begin{eqnarray*}
 &\ & \<\i \widetilde \partial_x\cdot(\phi \otimes \sigma), \phi\otimes \sigma\> \\ &=& \<-\i \cosh (c(x)) \widetilde \psi_- \otimes \sigma  +\sinh (c(x))  \widetilde \psi_+ \otimes\sigma,  -\cosh (c(x)) \widetilde \psi_+\otimes \sigma + \i\sinh (c(x)) \widetilde \psi_-\otimes \sigma \> \\ 
&=& -\sinh (c(x)) \cosh (c(x)) -\sinh (c(x)) \cosh (c(x))  = -2\sinh (c(x)) \cosh (c(x)) 
\end{eqnarray*}
and
\begin{eqnarray*}
&\ &\<\i \widetilde \partial_y\cdot(\phi\otimes \sigma), \phi\otimes \sigma\>\\ &=& \<\cosh (c(x)) \widetilde \psi_-  \otimes \sigma -\i\sinh (c(x)) \widetilde \psi_+ \otimes \sigma, - \cosh (c(x)) \widetilde \psi_+ \otimes \sigma+ \i\sinh (c(x)) \widetilde \psi_- \otimes \sigma\> \\ 
&=& -\i\sinh (c(x)) \cosh (c(x)) + \i\sinh (c(x)) \cosh (c(x)) = 0. 
\end{eqnarray*}
we obtain 
\begin{eqnarray*}
f^2 - \Vert V\Vert^2 &=& (\cosh^2 (c(x)) + \sinh^2(c(x)))^2- 4 \cosh^2 (c(x)) \sinh ^2(c(x))\\
&=& (\cosh^2 (c(x)) - \sinh ^2(c(x)) )^2=1.
\end{eqnarray*}

\end{proof}

{\bf Acknowledgment.} We are indebted to the Institute of Mathematics of
the University of Potsdam, especially to the group of Christian B\"ar, for their hospitality and support. The second author thanks also the Institute of Mathematics  of the University of Leipzig and gratefully acknowledges the financial support  of the Berlin Mathematical school. 

\bibliographystyle{acm}
\bibliography{imag}

\end{document}